\providecommand{\U}[1]{\protect\rule{.1in}{.1in}}
\newtheorem{theorem}{Theorem}
\newtheorem{corollary}[theorem]{Corollary}
\newtheorem{definition}[theorem]{Definition}
\newtheorem{lemma}[theorem]{Lemma}
\newtheorem{proposition}[theorem]{Proposition}
\newenvironment{proof}[1][Proof]{\noindent\textbf{#1.} }{\ \rule{0.5em}{0.5em}}
\def\myblacksquare{\rule{1.2ex}{1.2ex}}
\begin{document}

$\ $

\vspace{2.cm}

\begin{center}
{\Large \textbf{Extremal Cylinder Configurations II: \\ \vskip .2cm Configuration $O_6$}}

\vspace{.4cm} {\large \textbf{Oleg Ogievetsky$^{\diamond\,\ast}$\footnote{Also
at Lebedev Institute, Moscow, Russia.} and Senya Shlosman$^{^{\diamond}
\,\dag\,\ddagger}$}}

\vskip .3cm $^{\diamond}$Aix Marseille Universit\'{e}, Universit\'{e} de
Toulon, CNRS, \\ CPT UMR 7332, 13288, Marseille, France

\vskip .05cm $^{\dag}$Inst. of the Information Transmission Problems, RAS,
Moscow, Russia

\vskip .05cm $^{\ddagger}$ Skolkovo Institute of Science and Technology,
Moscow, Russia

\vskip .05cm $^{\ast}${Kazan Federal University, Kremlevskaya 17, Kazan
420008, Russia}
\end{center}

\vskip .4cm
\hspace{5.64cm} {\sf I have a silly walk and I'd like to obtain} 

\vskip .2cm
\hfill{\sf a Government grant to help me develop it.}

\vskip .3cm
\hfill {\it Monty Python} $\ \ \ \ \ \ \ \ $

$\ $

\begin{abstract}\noindent
We study the octahedral configurations $O_6$ \cite{K2} of six equal cylinders touching the unit sphere. We show that the configuration $O_6$ is a local sharp maximum of
the distance function. Thus it is not unlockable and, moreover, rigid. 
\end{abstract}

\newpage
\tableofcontents
\vspace{-.4cm}

\section{Introduction}
In the present paper we continue to study \textit{critical configurations} of six infinite nonintersecting right circular  
cylinders touching the unit sphere. We call a cylinder configuration \textit{critical} if for each small deformation $t$ 
that keeps the radii of the cylinders, either
\begin{itemize}
\item[(T1)] some cylinders start to intersect, or else
\item[(T2)] the distances between all of them increase, but by no more than
$$\sim\left\Vert t\right\Vert ^{2}\ ,$$
or stay zero, for some. The norm 
$\left\Vert t\right\Vert $ is defined by formula $\left( \ref{norm}\right)  $ below. 
\end{itemize}
\noindent A critical configuration is called a \textsl{locally maximal} configuration if all its deformations are of the first type. Any other critical configuration  is called a
\textsl{saddle configuration}, and the deformations of type (T2) are then called the \textsl{unlocking} deformations.
This definition of a critical configuration is close in spirit to the definition of a critical point of the Morse function, but is adapted to our case of the
function being non-smooth minimax function, compare with Definition 4.6 in \cite{KKLS}.

\vskip.1cm
For example, let $C_{6}$ be the configuration of six nonintersecting cylinders of radius $1,$ parallel to the
$z$ direction in $\mathbb{R}^{3}$ and touching the unit ball centered at the origin. One of the results of \cite{OS} is that the configuration $C_{6}$
is a saddle point configuration:  there is a deformation of $C_{6}$ along which the unit cylinders cease touching each other; thus, the configuration $C_{6}$ can be unlocked.
We note here that the structure of the critical point $C_{6}$ is  complicated; in particular, the distance function $D$ (the minimum of the distances between the cylinders) is not even continuous at
$C_{6},$ and the limits $\lim_{\mathbf{m}\rightarrow C_{6}}D\left( \mathbf{m}\right)  ,$ $\mathbf{m}\in M^{6}$, depend on the direction from which
the point $C_{6}$ is approached. Here $M^{6}$ is the relevant configuration space, see the precise definition in Section \ref{sectConfManifold}. 

\vskip .1cm
We have constructed in \cite{OS} the deformation $C_{6,x}$ of the configuration $C_6$. Moving along $C_{6,x}$ the common radius of cylinders grow  
when $x$ decreases from $1$ to $1/2$ ($x=1$ corresponds to the initial configuration $C_6$). For $x=1/2$ 
we obtain the configuration $C_{\mathfrak{m}}$, see Figure \ref{confCm}, for which the radius reaches its maximum value $\frac{1}{8}\left(  3+\sqrt{33}\right)$.
\begin{figure}[H]
\centering
\includegraphics[scale=0.24]{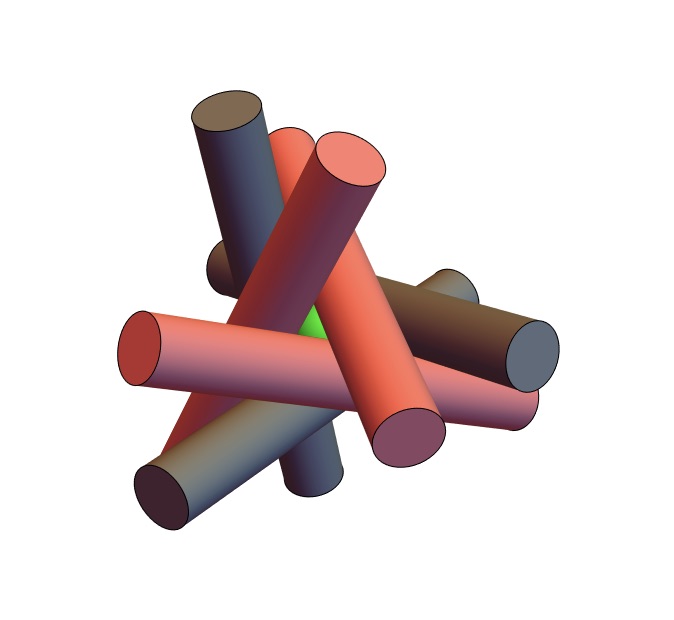}
\vspace{-.4cm} 
\caption{Configuration $C_{\mathfrak{m}}$} 
\label{confCm}
\end{figure}

\vskip .1cm
In \cite{OS-C6} we have shown that the configuration $C_{\mathfrak{m}}$ is a sharp local maximum of the distance function.

\vskip .1cm
In the present paper we study the configuration $O_{6},$ comprised of the
following six radius one cylinders:
\vspace{-.2cm}
\begin{itemize}
\item two cylinders are parallel to the $Oz$ axis and touch the sphere $\mathbb{S}^{2}$ at points $\left(\pm 1,0,0\right)  $ on the $Ox$ axis;
\vspace{-.3cm}
\item two cylinders are parallel to the $Ox$ axis and touch the sphere $\mathbb{S}^{2}$ at points $\left( 0,\pm 1,0\right)  $ on the $Oy$ axis;
\vspace{-.3cm}
\item two cylinders are parallel to the $Oy$ axis and  touch the sphere $\mathbb{S}^{2}$ at points $\left( 0,0,\pm 1\right)  $ on the $Oz$ axis.
\end{itemize}
The letter `O' in the name of the configuration refers, probably, to the fact that the points at which the cylinders touch the sphere
form the vertices of the regular octahedron. In a forthcoming publication \cite{OS-PC} we will give an interpretation of the configuration 
$O_6$ which rather relates it to the regular tetrahedron and suggest a generalization for dual pairs of Platonic bodies. 

\vskip .1cm
The configuration $O_6$ is centrally symmetric. There is a freedom in the definition of the configuration $O_6$:
two cylinders, touching the sphere $\mathbb{S}^{2}$ at points $\left(\pm 1,0,0\right)  $ are parallel to the $z$-axis. 
Instead one can start with the two cylinders, touching the sphere $\mathbb{S}^{2}$ at points $\left(\pm 1,0,0\right)  $ but parallel to the $y$-axis;  then add
the two cylinders, touching the sphere $\mathbb{S}^{2}$ at points $\left(0,\pm 1,0\right)  $ but parallel to the $z$-axis and 
the two cylinders, touching the sphere $\mathbb{S}^{2}$ at points $\left(0,0,\pm 1\right)  $ but parallel to the $x$-axis.
However this configuration is obtained from $O_6$ by the rotation around an arbitrary coordinate axis through the angle $\pi/2$ or $-\pi/2$. 

\vskip .1cm
The configuration $O_6$ of cylinders is shown on Figure \ref{octahedrConfCyl} (the green unit ball is in the center).
\begin{figure}[H]
\vspace{-.8cm} \centering
\includegraphics[scale=0.5]{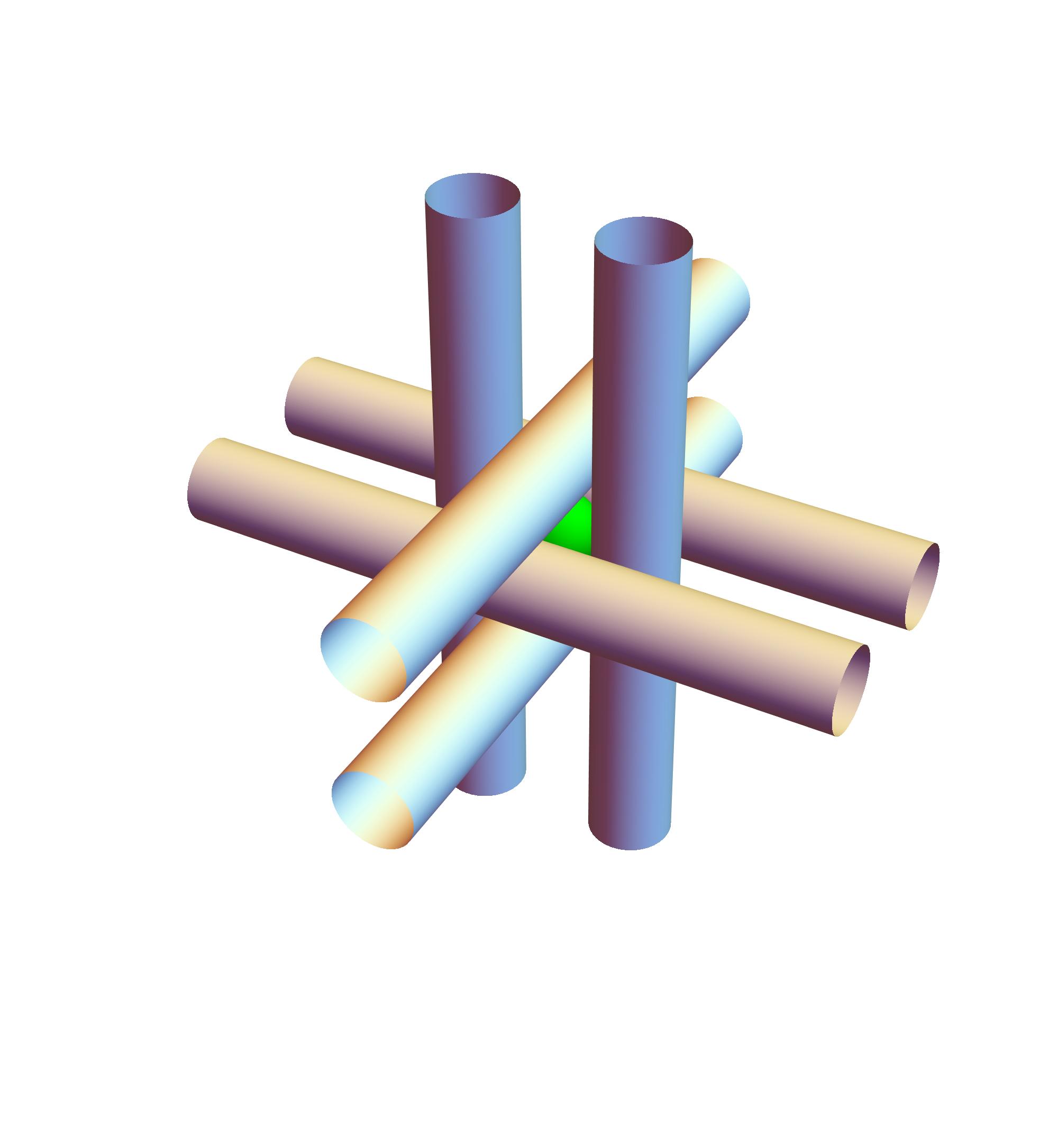}
\vspace{-1cm} 
\caption{Configuration $O_6$ of cylinders}
\label{octahedrConfCyl}
\end{figure}

\vskip .1cm
To visualize configurations of cylinders it is convenient to replace each cylinder by its unique generator (a line
parallel to the axis of the cylinder) touching the sphere $\mathbb{S}^{2}$. We define the value of the distance function on a configuration to be the minimum of distances between 
these tangent lines.

\vskip .1cm
The configuration $O_6$ of tangent lines is shown on Figure \ref{octahedrConf}.
\begin{figure}[th]
\vspace{-.4cm} \centering
\includegraphics[scale=0.7]{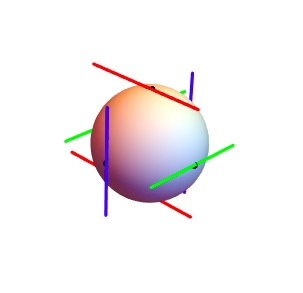}
\vspace{-1cm} 
\caption{Configuration $O_6$ of tangent lines}
\label{octahedrConf}
\end{figure}

\vskip .1cm
\noindent About the configuration $O_6$ W. Kuperberg was asking \cite{K} whether it can be unlocked, i.e. whether one can deform it in such a way that all the distances between the cylinders become positive.
Our result is that the configuration $O_{6}$ is a sharp local maximum of the distance function $D$, and therefore is not unlockable. Moreover, it is rigid, that is, any continuous deformation, which does not increase the radii of cylinders, reduces to a global rotation in the three-dimensional space. 

\vskip .1cm
In the process of the proof we will, in particular, show that the 15-dimensional
tangent space to $M^{6}\,\operatorname{mod}SO\left(  3\right)  $ at $O_{6}$ contains a 6-dimensional subspace along which
the function $D\left(  \mathbf{m}\right)$ decays quadratically, while along any other tangent direction it decays linearly.

\vskip .1cm
As for the configuration $C_{\mathfrak{m}}$ it turns out that it is sufficient to study the variations of distances up to the second order.

\vskip .1cm
For the configuration $O_6$ we distinguish twelve distances between the cylinders which are not parallel. Let 
$\widetilde{D}\left(  \mathbf{m}\right)$ be the minimum of these twelve distances. 
We prove that the configuration $O_{6}$ is a sharp local
maximum already of the function $\widetilde{D}$. 

\vskip .1cm
We first show that there are three convex linear dependencies $\lambda_a$, $a=1,2,3$, between the differentials of the twelve distances.  
We thus have a six-dimensional linear subspace $E$ of the tangent space on which all twelve differentials vanish.  

\vskip .1cm
It so happens that in our coordinates the groups of coordinates entering these linear combinations are disjoint.

\vskip .1cm
For the configuration $C_{\mathfrak{m}}$ there is one convex linear dependency between the differentials, see \cite{OS-C6} and the restriction of same linear combination $Q$ 
of the second differentials on the subspace, on which the differential vanish, is negatively defined. We have shown in \cite{OS-C6} that these conditions are sufficient for the local
maximality. In the present paper we prove a generalization (of the above result for the configuration $C_{\mathfrak{m}}$) which allows us to make a conclusion about the local 
maximality of the configuration $O_6$. 
In this modification the negativity of the form $Q$ is replaced by the non-existence of non-trivial solutions of the system of three 
inequalities $Q_a>0$, $a=1,2,3$, where $Q_a=\lambda_a(Q_1,\dots,Q_{12})\vert_E$. If there existed a convex linear combination of the forms  $Q_a>0$, $a=1,2,3$, 
with negatively defined restriction on $E$, we could simply refer to the assertion made in \cite{OS-C6}. However, this is not the case, see Section \ref{threeforms}.

\vskip .1cm
Let $\mathcal{Z}$ be the configuration space of six unit cylinders touching a unit ball. At his mathoverflow page \cite{K2} W. Kuperberg asks, among other questions:
\begin{itemize}
\item Is the space $\mathcal{Z}$  connected?
\item In particular, within $\mathcal{Z}$, is a continuous transition between the configurations $C_6$ and $O_6$ possible?
\end{itemize}

In the process of our proof of the local maximality of the configuration $O_6$, we establish that the configuration $O_6$ is an isolated point in the space  $\mathcal{Z}$ mod $SO(3)$ which implies the negative answer to these questions, see
Corollary \ref{cornonconne}, Subsection \ref{configO6}. So a modified question arises:
\begin{itemize}
\item How many components does the space $\mathcal{Z}$ have? 
\end{itemize}

The configuration $C_{\mathfrak{m}}$, in contrast to the configuration $O_6$,  is not mirror symmetric. 
We make several conjectures concerning the components of the space  $\mathcal{Z}$ mod $O(3)$ and mod $SO(3)$.

\vskip .1cm
The paper is organized as follows. In the next section we recall 
notation, concerning the manifold $M^{6}$, formulate the maximality result and discuss the connected components of the space $\mathcal{Z}$. Section \ref{maxconfO6} contains the
calculation of the necessary differentials. In Section \ref{genforms} we establish analytic results needed for the proofs of the local maximality of the configuration $O_6$.  

\section{Preliminaries}
A cylinder $\varsigma$ touching the unit sphere $\mathbb{S}^{2}$ has a unique generator (a line
parallel to the axis of the cylinder) $\iota(\varsigma)$ touching $\mathbb{S}^{2}$. We will usually represent a configuration $\{\varsigma
_{1},\dots,\varsigma_{L}\}$ of cylinders touching the unit sphere by the configuration $\{\iota(\varsigma_{1}),\dots,\iota(\varsigma_{L})\}$ of tangent
to $\mathbb{S}^{2}$ lines. The manifold of all such six-tuples will be denoted by $M^{6}.$

\vskip .1cm
Let $\varsigma^{\prime},\varsigma^{\prime\prime}$ be two equal cylinders of radius $r$ touching $\mathbb{S}^{2},$ which also touch each other, while
$\iota^{\prime},\iota^{\prime\prime}$ are the corresponding tangents to $\mathbb{S}^{2}.$ If $d=d_{\iota^{\prime}\iota^{\prime\prime}}$ is the
distance between $\iota^{\prime},\iota^{\prime\prime}$ then we have 
\[ r=\frac{d}{2-d}\ ,\]
The study of the manifold of six-tuples of cylinders of equal radii, some of which are touching, is equivalent to the study the manifold $M^{6}$ and
the function $D$ on it, defined by 
\[ D\left(  \iota_{1},...,\iota_{6}\right)  =\min_{1\leq i<j\leq6}d_{\iota
_{i}\iota_{j}}\ .\]

\subsection{Configuration manifold}
\label{sectConfManifold}
Here we collect the notation of \cite{OS}.

\vskip.2cm 
Let $\mathbb{S}^{2}\subset\mathbb{R}^{3}$ be the unit sphere, centered at the origin. For every $x\in\mathbb{S}^{2}$ we denote by $TL_{x}$
the set of all (unoriented) tangent lines to $\mathbb{S}^{2}$ at $x.$ We denote by $M$ the manifold of tangent lines to $\mathbb{S}^{2}.$ We represent
a point in $M$ by a pair $\left(  x,\xi\right)  $, where $\xi$ is a unit tangent vector to $\mathbb{S}^{2}$ at $x,$ though such a pair is not unique:
the pair $\left(  x,-\xi\right)  $ is the same point in $M.$ 

\vskip .1cm
We shall use the following coordinates on $M$. Let $\mathbf{x,y,z}$ be the standard coordinate
axes in $\mathbb{R}^{3}$. Let $R_{\mathbf{x}}^{\alpha}$, $R_{\mathbf{y} }^{\alpha}$ and $R_{\mathbf{z}}^{\alpha}$ be the counterclockwise rotations
about these axes by an angle $\alpha$, viewed from the tips of axes. We call the point $\mathsf{N}=\left(  0,0,1\right)  $ the North pole, and
$\mathsf{S}=\left(  0,0,-1\right)  $ -- the South pole. By \textit{meridians} we mean geodesics on $\mathbb{S}^{2}$ joining the North pole to the South
pole. The meridian in the plane $\mathbf{xz}$ with positive $\mathbf{x}$ coordinates will be called Greenwich. The angle $\varphi$ will denote the
latitude on $\mathbb{S}^{2},$ $\varphi\in\left[  -\frac{\pi}{2},\frac{\pi} {2}\right]  ,$ and the angle $\varkappa\in\lbrack0,2\pi)$ -- the longitude, so
that Greenwich corresponds to $\varkappa=0.$ Every point $x\in\mathbb{S}^{2}$ can be written as $x=\left(  \varphi_{x},\varkappa_{x}\right)$.

\vskip .1cm
Finally, for each $x\in\mathbb{S}^{2}$, we denote by $R_{x}^{\alpha}$ the rotation by the angle $\alpha$ about the axis joining $\left(  0,0,0\right)  $ to $x,$
counterclockwise if viewed from its tip, and by $\left(  x,\uparrow\right) $ we denote the pair $\left(  x,\xi_{x}\right)  ,$ $x\neq\mathsf{N,S,}$ where
the vector $\xi_{x}$ points to the North. We also abbreviate the notation $\left(  x,R_{x}^{\alpha}\uparrow\right)  $ to $\left(  x,\uparrow_{\alpha
}\right)  $.

\vskip.2cm 
Let $u=\left(  x^{\prime},\xi^{\prime}\right)  ,$ $v=\left( x^{\prime\prime},\xi^{\prime\prime}\right)  $ be two lines in $M$. We denote
by $d_{uv}$ the distance between $u$ and $v$; clearly $d_{uv}=0$ iff $u\cap v\neq\varnothing.$ If the lines $u,v$ are not parallel then the square of
$d_{uv}$ is given by the formula
\begin{equation}\label{formdist}
d_{uv}^{2}=\frac{\det^{2}[\xi^{\prime},\xi^{\prime\prime},x^{\prime\prime
}-x^{\prime}]}{1-(\xi^{\prime},\xi^{\prime\prime})^{2}}\ ,\end{equation}
where $(\ast,\ast)$ is the scalar product. 

\vskip.2cm 
We are studying the critical points of the function
\[ D\left(  \mathbf{m}\right)  =\min_{1\leq i<j\leq N}d_{u_{i}u_{j}}\ ,\]
on the manifold $M^{N}$ is of N-tuples
\begin{equation} \mathbf{m}=\left\{  u_{1},...,u_{N}\,:\, u_{i}\in M\, ,\, i=1,...,N\right\}  .\end{equation}
 
The norm which we mentioned in the Introduction is defined by
\begin{equation} \left\Vert u-v\right\Vert =\left\Vert x^{\prime}-x^{\prime\prime}\right\Vert
+\min\left\{  \left\Vert \xi^{\prime}-\xi^{\prime\prime}\right\Vert
,\left\Vert \xi^{\prime}+\xi^{\prime\prime}\right\Vert \right\}  .\label{norm} \end{equation}
 
\subsection{Configuration $O_{6}$}\label{configO6}
We denote by $e_i$ the orthonormal basis in $\mathbb{R}^3$,
$$e_1\equiv e_x=(1,0,0)\ ,\ e_2\equiv e_y=(0,1,0)\ ,\ e_3\equiv e_z=(0,0,1)\ .$$
Let $\varrho$ be the rotation of order 3, which cyclically permutes the vectors $e_i$,
$$\varrho\colon e_1\mapsto e_2\mapsto e_3\mapsto e_1\ ,$$
$\mathcal{I}$ the central reflection,
$$\mathcal{I}v=-v\ ,\ v\in \mathbb{R}^3\ ,$$
and $r$ the rotation around the axe $Ox$ by the angle $\pi$,
$$r\colon e_1\mapsto e_1\ ,\ e_2\mapsto -e_2\ ,\ e_3\mapsto -e_3\ .$$
The maps $\varrho$, $\mathcal{I}$ and $r$ generate the group $\mathbb{A}_4\times C_2$ where $\mathbb{A}_4$ is the alternating group on four letters, generated by $\varrho$ and $r$,
and $C_2$ is the cyclic group of order 2, generated by $\mathcal{I}$.

\vskip .1cm
Let $\ell_{1}^+$ be the line in the direction $e_3$ touching the unit sphere at the point $e_1$ and let $\ell_{2}^+=\varrho\ell_{1}^+$, $\ell_{3}^+=\varrho^2\ell_{1}^+$. The images of the lines $\ell_j^+$, $j=1,2,3$, under the central reflection $\mathcal{I}$ will be denoted by $\ell_j^-$,
 $$\ell_j^-=\mathcal{I}\ell_j^+\ ,\ j=1,2,3\ .$$
The six lines $\ell_j^+$, $\ell_j^-$, $j=1,2,3$, form the configuration $O_6$. The symmetry group of the configuration $O_6$ is $\mathbb{A}_4\times C_2$.

\vskip .1cm
Let $O_6(t)$ be a deformation of the configuration $O_6$. We have fifteen pairwise distances between the lines in $O_6(t)$. There are three
distances $d_{\ell_j^+(t),\ell_j^-(t)}$, $j=1,2,3$, which do not have a well defined limit when $t\to 0$ because the lines $\ell_j^+$ and $\ell_j^-$
are parallel. The remaining twelve distances do have a well defined limit, equal to 1, when $t\to 0$. We shall first study these twelve distances.  
More generally, let $\mathbf{m}$ be a point in a small enough neighborhood of $O_{6}$ and let $\widetilde{\ell}_j^+$, $\widetilde{\ell}_j^-$, $j=1,2,3$ be the positions of perturbed lines 
$\ell_j^+$, $\ell_j^-$, $j=1,2,3$. Let 
\begin{equation}\label{oprtid}\widetilde{D}(\mathbf{m}):=\min_{1\leq i<j\leq 3,\epsilon,\epsilon'=\pm} \left(d_{\widetilde{\ell}_i^\epsilon ,\widetilde{\ell}_j^{\epsilon' }} \right)\ .\end{equation}

\begin{theorem}\label{localmaxO6}
The configuration $O_{6}\ $is a point of a sharp local maximum of the function $\widetilde{D}$:
for any point $\mathbf{m}$ in a vicinity of $O_{6}$ we have
\[ \widetilde{D}\left(  \mathbf{m}\right)  <1=\widetilde{D}\left(  O_{6}\right) \ .\]
\end{theorem}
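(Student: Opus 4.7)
The strategy is to exploit the refined criterion developed in Section \ref{genforms}: rather than seeking a single quadratic form controlling the decay of $\widetilde{D}$, I will track three separate quadratic forms, one for each convex linear dependency among the differentials of the twelve relevant distances.

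First I compute, via formula (\ref{formdist}), the first variations $d\, d^{\epsilon\epsilon'}_{ij}$ at $O_6$ of the twelve distances between the six non-parallel line pairs, working in a local coordinate chart on $M^6/SO(3)$ of dimension $15$. Organizing the computation along the orbits of the symmetry group $\mathbb{A}_4\times C_2$, I then extract three convex linear relations
\[\lambda_a\bigl(d\, d^{\epsilon\epsilon'}_{ij}\bigr)\;=\;0,\qquad a=1,2,3,\]
each involving its own packet of four differentials. As the introduction emphasizes, in well-chosen coordinates the three relations involve disjoint sets of coordinate variables; this decomposes the common kernel $E$ of all twelve first differentials as a direct sum $E=E_1\oplus E_2\oplus E_3$ of three two-planes, confirming that $\dim E = 6$.

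Next I compute the Hessians $Q_i$, $i=1,\dots,12$, of the twelve distance functions at $O_6$ and restrict them to $E$. Since $E$ lies in the zero locus of all first differentials, the restrictions $Q_i\big|_E$ are well-defined quadratic forms on $E$, and so are the three combinations
\[Q_a\;:=\;\lambda_a(Q_1,\dots,Q_{12})\Big|_E,\qquad a=1,2,3.\]
The generalized criterion from Section \ref{genforms} then asserts that $\widetilde{D}$ attains a sharp local maximum at $O_6$ as soon as the system
\[Q_1(v)>0,\qquad Q_2(v)>0,\qquad Q_3(v)>0\]
admits no non-zero solution $v\in E$. Indeed, any nontrivial deformation either has a nonzero first-order component in a direction outside $E$ (so one of the twelve distances drops linearly) or it lies to first order in $E$, in which case some $Q_a$ must be non-positive and the corresponding convex packet of distances drops to second order.

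The main obstacle is precisely the verification that the system $Q_a(v)>0$, $a=1,2,3$, has no nontrivial solution on $E$. The obvious shortcut, exhibiting a convex combination $c_1Q_1+c_2Q_2+c_3Q_3$ that is negative-definite on $E$, is unavailable here: as noted in Section \ref{threeforms}, no such combination exists, which is precisely the reason the stronger criterion of Section \ref{genforms} is required in the first place. To bypass this, I would use the block-disjoint structure granted by the disjointness of coordinates: the ``diagonal'' part of each $Q_a$ lives on its own plane $E_a$, while the coupling between different $E_b$'s enters through a controlled list of explicit cross-coefficients. After diagonalizing each $Q_a$ on $E_a$, the simultaneous positivity of $Q_1,Q_2,Q_3$ becomes a small explicit semialgebraic problem on a six-variable space; I expect to settle it by direct inspection, possibly using a Positivstellensatz-type certificate strictly more refined than a pure convex combination. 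Once this is established, the theorem follows at once from the criterion of Section \ref{genforms}.
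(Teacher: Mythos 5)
Your overall architecture matches the paper's: twelve first differentials, three convex dependencies, a $6$-dimensional kernel $E$, three quadratic forms obtained by applying the $\lambda_a$ to the second differentials and restricting to $E$, and an appeal to the generalized criterion of Section~\ref{genforms} after noting that no negatively defined convex combination exists. But there is a genuine gap at the decisive step. The entire theorem hinges on proving that the system $\Upsilon_1\geq 0$, $\Upsilon_2\geq 0$, $\Upsilon_3\geq 0$ has only the trivial solution on $E$, and your proposal does not prove this: it defers to ``direct inspection, possibly using a Positivstellensatz-type certificate,'' i.e.\ it states an expectation where the paper supplies an argument. The paper's verification (Subsection~\ref{secdifb}) is short but not automatic: one rewrites the three inequalities as (\ref{combsecor1})--(\ref{combsecor3}), multiplies the first two to get $(c_1^-)^2(c_2^+-c_2^-)(c_3^+-c_3^-)\geq 0$, plays this against the sign forced by (\ref{combsecor3}), and concludes $c_1^-=0$, after which the remaining variables collapse one by one. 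Without some such certificate your proof is incomplete at its core.

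Moreover, the structural picture you propose to exploit for this verification is wrong. The disjointness of coordinate packets (condition (B) of Section~\ref{genforms}) concerns the \emph{linear} parts: the three dependencies involve the variable groups $\{b_2^\pm,a_1^-\}$, $\{a_3^\pm,b_1^-\}$ and $\{b_3^\pm,a_2^\pm\}$, of sizes $3$, $3$ and $4$. Consequently $E$ is not a direct sum of three two-planes: it is cut out by (\ref{rela1})--(\ref{rela2}) and is coordinatized by $\omega$ together with the five variables $c_1^-,c_2^\pm,c_3^\pm$, which do not enter any linear part at all. On $E$ the three forms $\Upsilon_a$ share these variables in an overlapping way ($\Upsilon_1$ involves $c_1^-,c_2^\pm,\omega$; $\Upsilon_2$ involves $c_1^-,c_3^\pm$; $\Upsilon_3$ involves $c_2^\pm,c_3^\pm$), so there is no block-diagonal structure to diagonalize separately on summands $E_a$, and the plan of reducing to ``diagonal parts plus controlled cross-terms'' does not get off the ground. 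Each form also has rank only three, so none is definite on its own $4$- or $5$-variable support. You need either the paper's explicit elimination argument or an equivalent algebraic certificate; as written, the proposal asserts the conclusion of the hardest step rather than establishing it.
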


We have $\widetilde{D}(O_6)=D(O_6)=1$ and $D(\mathbf{m})\leq \widetilde{D}(\mathbf{m})$.
This implies that the configuration $O_6$ is locally maximal.

\begin{corollary} The configuration $O_{6}\ $is a point of a sharp local maximum of the function $D$. \end{corollary}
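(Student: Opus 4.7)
The plan is to derive this corollary directly from Theorem~\ref{localmaxO6} via the sandwich argument sketched by the authors in the paragraph preceding the statement, and to verify the one small check that it conceals. My first step is to record two pointwise facts valid on a neighborhood of $O_6$ in $M^6$: (i) because $D$ minimizes over all $15$ pairs of lines while $\widetilde{D}$ minimizes over only the twelve pairs that are non-parallel at $O_6$, we have $D(\mathbf{m})\le \widetilde{D}(\mathbf{m})$ everywhere; (ii) at $O_6$ itself the twelve non-parallel distances equal $1$, whereas the three distances between the parallel pairs $\ell_j^+,\ell_j^-$ equal $2$, so that the minimum defining $D(O_6)$ is achieved on the twelve non-parallel terms and $D(O_6)=\widetilde{D}(O_6)=1$.

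Given these two observations, Theorem~\ref{localmaxO6} supplies a punctured neighborhood $U$ of $O_6$ on which $\widetilde{D}(\mathbf{m})<1$, whence
\[ D(\mathbf{m})\;\le\;\widetilde{D}(\mathbf{m})\;<\;1\;=\;D(O_6) \qquad \text{for every } \mathbf{m}\in U ,\]
which is exactly the sharp local maximum property asserted by the corollary.

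I do not anticipate any real obstacle; the only point requiring a brief verification is that $U$ can be shrunk so that the three ``parallel-pair'' distances $d_{\widetilde{\ell}_j^+,\widetilde{\ell}_j^-}$ (which equal $2$ at $O_6$ but behave somewhat subtly nearby, since formula~(\ref{formdist}) degenerates as those lines return to parallel) remain strictly above $1$ throughout $U$. This follows from the continuity of the ordinary geometric line-to-line distance on pairs of nearby, non-coincident lines; in fact on any sufficiently small $U$ these three distances stay close to $2$, so they cannot realize the minimum defining $D(\mathbf{m})$ there, and the passage from $D$ to $\widetilde{D}$ hides no genuine difficulty.
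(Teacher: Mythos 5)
Your proposal is correct and is essentially the paper's own argument: the authors likewise deduce the corollary from $D(\mathbf{m})\le\widetilde{D}(\mathbf{m})$ together with $D(O_6)=\widetilde{D}(O_6)=1$ and Theorem~\ref{localmaxO6}. Your extra check that the three parallel-pair distances stay above $1$ is harmless but not actually needed, since $D(\mathbf{m})\le\widetilde{D}(\mathbf{m})<1=D(O_6)$ already gives the strict inequality regardless of which pair realizes the minimum.
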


In the process of the proof of Theorem \ref{localmaxO6}, we will see that there exists a 6-dimensional subspace $L_{quadr}$ in the
tangent space to $M^{6}\,\operatorname{mod}SO\left(  3\right)  $ at $O_{6}$, such that for any $l\in L_{quadr}$, $\left\Vert l\right\Vert =1$, we have
\[ -c_u \left\Vert l \right\Vert t^{2}\leq \widetilde{D}\left(  O_{6}+tl\right)
-\widetilde{D}\left(  O_{6}\right)  \leq -c_d \left\Vert l \right\Vert t^{2} \]
for $t$ small enough. Here $c_d$ and $c_u$ are some constants, $0<c_d \leq c_u <+\infty$ and $O_{6}+tl\in M^{6}\,\operatorname{mod}SO\left(  3\right)  $
stands for the exponential map applied to the tangent vector $tl$. 

\vskip .1cm
For the tangent vectors outside $L_{quadr}$ we have
\[ -c_u^{\prime}\left(  l\right)  t\leq \widetilde{D}\left(  O_{6}+tl\right)  -\widetilde{D}\left(
O_{6}\right)  \leq -c_d^{\prime}\left(  l\right)  t,\]
where now $c_d^{\prime}\left(  l\right)$ and $c_u^{\prime}\left(  l\right)$ are some positive valued functions of $l$, $0<c^{\prime}\left(  l\right)  \leq c^{\prime\prime}\left(  l\right)<+\infty$.

\subsection{Connected components}
In \cite{OS-C6} we have shown that the configuration $C_\mathfrak{m}$ is a local maximum. Together with Theorem \ref{localmaxO6} we arrive at the following conclusion.

\begin{corollary}\label{cornonconne}
The configuration space $\mathcal{Z}$ of six unit cylinders touching a unit ball is not connected.\end{corollary}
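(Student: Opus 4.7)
The strategy is to upgrade Theorem \ref{localmaxO6} into the statement that the $SO(3)$-orbit of $O_6$ is a clopen subset of $\mathcal{Z}$, and then to exhibit a point of $\mathcal{Z}$ outside this orbit.

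The first step is the observation that $\widetilde{D}(\mathbf{m})\geq 1$ for every $\mathbf{m}\in\mathcal{Z}$. Indeed, the relation $r=d/(2-d)$ recalled in the Preliminaries between the common radius of two equal touching cylinders and the distance $d$ between the corresponding tangent lines is strictly monotone, so non-intersection of two unit cylinders tangent to $\mathbb{S}^{2}$ is equivalent to $d\geq 1$ for the two tangent lines; applied to each of the twelve non-parallel pairs entering the definition (\ref{oprtid}) this gives $\widetilde{D}(\mathbf{m})\geq 1$. Combined with Theorem \ref{localmaxO6}, which asserts $\widetilde{D}(\mathbf{m})<1=\widetilde{D}(O_6)$ for every $\mathbf{m}\neq O_6$ in a neighborhood of $O_6$ in $M^{6}\operatorname{mod}SO(3)$, this forces no such $\mathbf{m}$ to lie in $\mathcal{Z}$. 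Hence $O_6$ is an isolated point of $\mathcal{Z}\operatorname{mod}SO(3)$.

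Second, since $SO(3)$ is compact, its orbit $SO(3)\cdot O_6$ is closed in $M^{6}$ and hence in $\mathcal{Z}$; by the previous paragraph it is also open in $\mathcal{Z}$. Thus $SO(3)\cdot O_6$ is a clopen subset of $\mathcal{Z}$, that is, a union of connected components. To finish, I exhibit a point of $\mathcal{Z}$ outside this orbit: the configuration $C_6$ introduced in the Introduction — six radius-one cylinders parallel to $Oz$ touching $\mathbb{S}^{2}$ at six equally spaced points on the equator — has adjacent cylinder axes at distance $2$ and therefore $D(C_6)=1$, so $C_6\in\mathcal{Z}$; and $C_6\notin SO(3)\cdot O_6$ because all six axes in $C_6$ are parallel whereas only three pairs are parallel in $O_6$. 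Consequently $\mathcal{Z}$ decomposes into at least two disjoint nonempty clopen pieces, so it is disconnected.

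The argument is essentially formal once Theorem \ref{localmaxO6} is in hand, so I do not anticipate any hard step. The only point requiring care is reading ``vicinity of $O_6$'' in Theorem \ref{localmaxO6} as a vicinity in $M^{6}\operatorname{mod}SO(3)$ rather than in $M^{6}$ itself; otherwise a small rotation of $O_6$ would produce a point $\mathbf{m}\neq O_6$ with $\widetilde{D}(\mathbf{m})=1$, spoiling the strict inequality that drives the first step.
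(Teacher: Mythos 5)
Your proof is correct and follows essentially the same route as the paper: both deduce from Theorem \ref{localmaxO6} that $O_6$ is isolated in $\mathcal{Z}\operatorname{mod}SO(3)$ and then exhibit a configuration outside its $SO(3)$-orbit. The only cosmetic difference is that you use $C_6$ itself as the witness, while the paper points to the whole curve $\gamma(\varphi)$ from \cite{OS} (which starts at $C_6$); your added details --- the reduction of non-overlapping to $\widetilde{D}\geq 1$ via $r=d/(2-d)$ and the clopen-ness of the compact orbit --- merely make explicit what the paper leaves implicit.
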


\begin{proof} Theorem \ref{localmaxO6} implies that the configuration $O_6$ is an isolated point in the space $\mathcal{Z}$ mod $SO(3)$. Hence the configurations
$$\gamma(\varphi)=C_{6}\bigl(\varphi,\delta\left(  \varphi\right)
,\varkappa\left(  \varphi\right)  \bigr)\ ,\ \varphi\in\left[  0;\arcsin \left(\frac{\sqrt{5}}{4}\right)\right]\ ,$$
constructed in \cite{OS}, belong to another component of $\mathcal{Z}$ (at $\phi=\arcsin \left(\frac{\sqrt{5}}{4}\right)$ the function $D\bigl(  \gamma(\varphi)\bigr)$ gets back its
initial value 1, see the end of Section 5 in \cite{OS}).
\end{proof}

\vskip .3cm
\noindent{\bf Remark.} In contrast to the configuration $O_6$, the configuration $C_\mathfrak{m}$ is not congruent to its mirror image. To show this, 
we need several definitions. 

\vskip .1cm
A triple $\mathcal{T}$ of straight lines is said to be in a generic position if there is no plane parallel to all three lines. A triple $\mathcal{T}$ in a generic position defines an orientation of the space
$\mathbb{R}^3$, or, if an orientation of $\mathbb{R}^3$ is given, a sign $\sigma (\mathcal{T})$. This sign is defined as follows. There is a unique hyperboloid $\mathcal{H}(\mathcal{T})$ of one sheet, passing through 
the three straight lines of $\mathcal{T}$. Let the equation of the hyperboloid $\mathcal{H}(\mathcal{T})$, in its principal axes, be $z^2=ax^2+by^2-r^2$. The hyperboloid $\mathcal{H}(\mathcal{T})$ has two families of rulings and the three lines of $\mathcal{T}$ belong to 
the same family. So, viewed from a remote point on the $z$-axis (that is, a point $(0,0,\mathsf{z})$ with $\mathsf{z}$ big enough),  
we will see a picture of the three lines of $\mathcal{T}$ isotopic to the one shown on Figure \ref{SkewTriples}.
The isotopy class of the picture will not change if we are looking from the minus $\infty$ of the $z$-axis, that is, from a point $(0,0,-\mathsf{z})$ with $\mathsf{z}$ big enough. 
The sign $\sigma (\mathcal{T})$ associated to the triple $\mathcal{T}$ is $+$ if we see the left picture; for the right picture the sign is $-$.
\begin{figure}[H]
\vspace{.1cm} \centering
\includegraphics[scale=0.3]{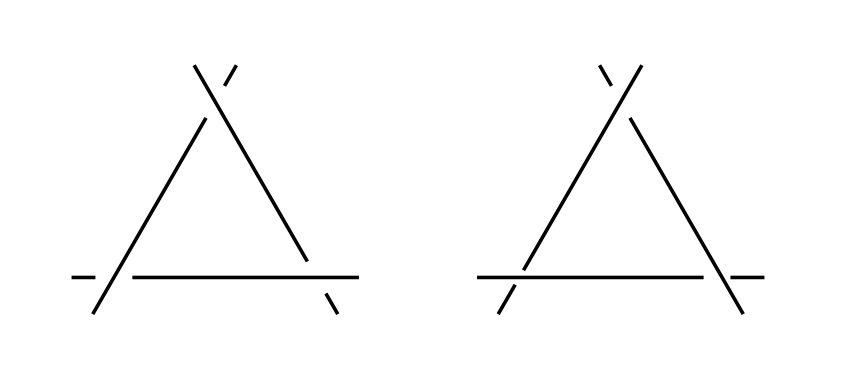}
\caption{Three skew lines}
\label{SkewTriples}
\end{figure}

\vskip .1cm
There is a way, see e. g. \cite{VV}, to calculate the sign $\sigma (\mathcal{T})$ without determining the hyperboloid $\mathcal{H}(\mathcal{T})$. This is done as follows. First one associates a sign to a pair of oriented skew straight lines, as shown on Figure 
\ref{Skewlines+-}. 
\begin{figure}[H]
\vspace{.1cm} \centering
\includegraphics[scale=0.3]{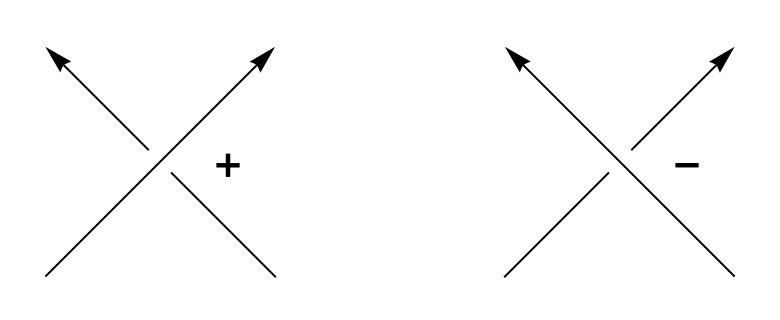}
\caption{Oriented lines}
\label{Skewlines+-}
\end{figure}

\vskip .1cm
Given a triple $\mathcal{T}$, equip the lines of $\mathcal{T}$ arbitrarily with an orientation. Then the sign $\sigma (\mathcal{T})$ is equal to the product of the signs for the three pairs of 
these oriented lines. 

\vskip .1cm
The configuration $C_\mathfrak{m}$ has a three-fold axis of symmetry. The action of the cyclic group $\mathcal{C}_3$ on the set of six cylinders of $C_\mathfrak{m}$ 
has two orbits of length three. These are the brown and the red triplets of cylinders on Figure \ref{confCm}. The sign of both triples is negative. For the configuration, obtained by a central symmetry 
from the configuration $C_\mathfrak{m}$, the sign of the both reflected triples is positive. Therefore, the configuration $C_\mathfrak{m}$ is not congruent to its mirror image.
Let us for the moment call $C_\mathfrak{m}^+$ the configuration on Figure \ref{confCm}, and $C_\mathfrak{m}^-$ its mirror image.

\vskip .1cm
Let $\mathcal{Z}(\mathsf{\geq R})$, respectively $\mathcal{Z}(\mathsf{> R})$, denote the configuration space, mod $SO(3)$, of $6$ cylinders,
of radius bigger or equal to $\mathsf{R}$, respectively, bigger than $\mathsf{R}$, touching the unit ball. 

\vskip .1cm
In the space $\mathcal{Z}(\mathsf{\geq 1})$, the configurations  $C_\mathfrak{m}^+$ and  $C_\mathfrak{m}^-$ are in the same connected component: one can move from
 $C_\mathfrak{m}^+$ to $C_6$ and then return to  $C_\mathfrak{m}^-$. 
 
\vskip .1cm 
 \noindent{\bf Conjecture.} The configurations $C_\mathfrak{m}^+$ and  $C_\mathfrak{m}^-$  belong to different connected components of the space 
$\mathcal{Z}(\mathsf{>1})$. 

\vskip .1cm 
The following observation might be helpful in testing this conjecture. There are twenty different triples of tangent lines in the configuration $C_\mathfrak{m}^+$. Among them there are twelve 
positive triples and eight negative triples. Therefore, in a motion from $C_\mathfrak{m}^+$ to  $C_\mathfrak{m}^-$ some triples have to pass a non-generic position; the formulas for the distances 
between the cylinders slightly simplify when there is a non-generic triple. 

\vskip .1cm
Also, one can ask the following natural questions. Let $\mathcal{D}$ be a configuration of non-overlapping cylinders of the same radius. Supply each cylinder with an orientation. 
Let $\mathsf{p}$ be a path in the space $\mathcal{Z}(\mathsf{\geq R})$ or $\mathcal{Z}(\mathsf{> R})$
which starts and ends with the configuration $\mathcal{D}$. In general, the path $\mathsf{p}$ might permute the cylinders or change their orientation. 
The first question is -- what is the group of permutations and orientation changes induced by all possible such paths? 

\vskip .1cm
We conjecture that for the configuration $C_\mathfrak{m}$ the only permutations of oriented cylinders 
which can be achieved by a motion in the space $\mathcal{Z}(\mathsf{\geq 1})$ are the rigid rotations from the dihedral group $\mathbb{D}_3$.

\vskip .1cm
The group generated by permutations and orientation changes of  six cylinders is the wreath product $\mathbb{S}_6\wr\, \mathcal{C}_2$. It is interesting to know what is the maxi\-mal radius $\mathsf{R}$ of cylinders for which the whole group $\mathbb{S}_6\wr \mathcal{C}_2$ is realizable by paths in $\mathcal{Z}(\mathsf{\geq R})$. 

\vskip .1cm
Let us say that a subgroup $\mathcal{H}$ of $\mathbb{S}_6\wr\, \mathcal{C}_2$ is path-realizable if there exists a configuration $\mathcal{D}$ of six 
non-overlapping cylinders in $\mathcal{Z}(\mathsf{\geq R})$ for some $\mathsf{R}$, such that the elements of 
$\mathbb{S}_6\wr\, \mathcal{C}_2$, realizable by paths in $\mathcal{Z}(\mathsf{\geq R})$, form the subgroup $\mathcal{H}$. Which subgroups
 of $\mathbb{S}_6\wr\, \mathcal{C}_2$ are path-realizable? For example, for twelve spheres of radius slightly larger than one, touching the unit sphere,  it is known that the subgroup $\mathbb{A}_{12}$ of  $\mathbb{S}_{12}$ is path-realizable, see Appendix to Chapter 1 in \cite{CS}.
What is the maximal radius $\mathsf{R}$ for each path-realizable subgroup of $\mathbb{S}_6\wr\, \mathcal{C}_2$? 
Does this maximal radius $\mathsf{R}$ depend on the connected component, to which the configuration $\mathcal{D}$ belongs, of the space $\mathcal{Z}(\mathsf{\geq R})$? 
Of course, these questions can be asked about any number of cylinders, not necessarily six.

\section{Criticality of $O_{6}$}\label{maxconfO6}
We shall study the deformed configuration $O_6(t)$, formed by the tangent lines
$$\ell_{j}^\epsilon(t)=R_{\varrho^2 e_j}^{a_j^\epsilon\, t}\, R_{\varrho e_j\phantom{^2}}^{b_j^\epsilon\, t}\, R_{e_j\phantom{^2}}^{c_j^\epsilon\, t}\, \ell_{j}^\epsilon\ ,\ j=1,2,3\ ,\ \epsilon\in \{+,-\}\ ,$$
where we write $R_{e_1}^\alpha$ (respectively, $R_{e_2}^\alpha$ and $R_{e_3}^\alpha$)
for $R_{\mathbf{x}}^{\alpha}$ (respectively, $R_{\mathbf{y}}^{\alpha}$ and $R_{\mathbf{z}}^{\alpha}$).

\vskip .1cm
To fix the rotational symmetry we keep the tangent line $\ell_1^+$ at its place, that is, $a_1^+=0$, $b_1^+=0$ and $c_1^+=0$.

\vskip .1cm
We shall be studying the variations of twelve distances appearing in the function $\widetilde{D}$, defined by the formula (\ref{oprtid}).

\subsection{First differentials}\label{secdif}
First we calculate (directly) the deformations of the squares of distances in the first order.
For brevity, we shall write $[d^2_{u,v}]_j$ for the coefficient at $t^j$ in the function $d^2_{u(t),v(t)}$ where $u,v\in O_6(t)$. Here is the result:
$$\begin{array}{c}
 [d^2_{\ell_1^+,\ell_2^-}]_1=-2b_{2 }^-\ ,\ [d^2_{\ell_1^+,\ell_2^+}]_1=2b_{2 }^+\ ,\\[1em]
 \lbrack d^2_{\ell_1^+,\ell_3^-}\rbrack_1=2a_{3 }^-\ ,\ [d^2_{\ell_1^+,\ell_3^+}]_1=-2a_{3 }^+\ ,\end{array}$$
\begin{equation}\label{disjvar}
\begin{array}{c}
 \lbrack d^2_{\ell_1^-,\ell_2^-}\rbrack_1=2(b_{2 }^- -a_{1 }^-)\ ,\ [d^2_{\ell_1^-,\ell_2^+}]_1=2(a_{1 }^- -b_{2 }^+)\ ,\\[1em]
 \lbrack d^2_{\ell_1^-,\ell_3^-} \rbrack_1=2(b_{1 }^- -a_{3 }^-)\ ,\ [d^2_{\ell_1^-,\ell_3^+}]_1=2(a_{3 }^+ -b_{1 }^-)\ ,
\end{array}\end{equation}
$$
\begin{array}{c}
 \lbrack d^2_{\ell_3^-,\ell_2^-}\rbrack_1=2(b_{3 }^- -a_{2 }^-)\ ,\ [d^2_{\ell_3^-,\ell_2^+}]_1=2(a_{2 }^+ -b_{3 }^-)\ ,\\[1em]
 \lbrack d^2_{\ell_3^+,\ell_2^-}\rbrack_1=2(a_{2 }^- -b_{3 }^+)\ ,\ [d^2_{\ell_3^+,\ell_2^+}]_1=2(b_{3 }^+ -a_{2 }^+)\ .
 \end{array}$$
The expressions in the first two lines are shorter because the the tangent line $\ell_1^+$ does not move.

\vskip .1cm
The above differentials are not independent. All the linear relations
between them are linear combinations of the following three relations:
\begin{eqnarray}\label{depe1}&[d^2_{\ell_1^+,\ell_2^-}]_1+[d^2_{\ell_1^+,\ell_2^+}]_1+[d^2_{\ell_1^-,\ell_2^-}]_1+[d^2_{\ell_1^-,\ell_2^+}]_1
=0\ ,&\\[1em]
\label{depe2}&[d^2_{\ell_1^+,\ell_3^-}]_1+[d^2_{\ell_1^+,\ell_3^+}]_1+[d^2_{\ell_1^-,\ell_3^-}]_1+[d^2_{\ell_1^-,\ell_3^+}]_1
=0\ ,\\[1em]
\label{depe3}
&\lbrack d_{\ell_{2}^{-},\ell_{3}^{-}}^{2}]_{1}+[d_{\ell_{2}^{-},\ell_{3}^{+}
}^{2}]_{1}+[d_{\ell_{2}^{+},\ell_{3}^{-}}^{2}]_{1}+[d_{\ell_{2}^{+},\ell
_{3}^{+}}^{2}]_{1}=0\ .&
\end{eqnarray}

It follows that the distances will not decrease in the first order only if
$$\begin{array}{c}0\geq b_{2 }^- \geq a_{1 }^- \geq b_{2 } ^+\geq 0\ ,\\[.8em]
0\geq a_{3 } ^+ \geq b_{1 } ^- \geq a_{3 }^-\geq 0\ ,\\[1.em]
 b_{3 }^- \geq a_{2 }^- \geq b_{3 }^+\geq a_{2 }^+\geq b_{3 }^-\ .\end{array}$$
Therefore we must have
\begin{eqnarray}\label{rela1}&b_{2 }^- = a_{1 }^- = b_{2 }^+= 0\ ,&\\[.6em]
\label{rela1b}&a_{3 }^+ = b_{1 }^- = a_{3 }^-= 0\ ,&\\[.6em]
\label{rela2} &b_{3 }^- = a_{2 }^- = b_{3 }^+= a_{2 }^+=\omega\ ,&\end{eqnarray}
where we have denoted by $\omega$ the common value of the four
equal coefficients. In the remaining regime, defined by nine relations (\ref{rela1})--(\ref{rela2}), the linear contribution to the 12 distances vanish.

\paragraph{Remark.} As mentioned in the Introduction, the important object in the questions concerning local maxima of a minimum of several analytic functions is the vector subspace $E$, on which all differential vanish, of the tangent space to a given configuration. In our situation the subspace $E$ is 
given by the equations (\ref{rela1})--(\ref{rela2}). 

\vskip .1cm
One may wonder if a consideration of the remaining three distances 
$d_{\ell_{1}^{+},\ell_{1}^{-}}$, $d_{\ell_{2}^{+},\ell_{2}^{-}}$ and $d_{\ell_{3}^{+},\ell_{3}^{-}}$ could simplify the analysis (being 2 at $t=0$, these distances can drop to 0 under an infinitesimal deformation). The answer is negative: it turns out 
that these three distances keep their value 2 under the infinitesimal variations from the subspace $E$. For example, the positions of the cylinders 
$\ell_{2}^+(t)$ and $\ell_{2}^-(t)$ along a deformation from $E$ are 
$$\ell_{2}^+(t)=R_{e_1}^{\omega t}\, R_{e_2}^{c_2^+ (t)}\, \ell_{2}^+\ ,\ \ell_{2}^-(t)=R_{e_1}^{\omega t}\, R_{e_2}^{c_2^- (t)}\, \ell_{2}^-\ .$$
The distance between the tangent lines $\ell_{2}^+(t)$ and $\ell_{2}^-(t)$ is the same as the distance between the tangent lines $\check{\ell}_{2}^+(t):=R_{e_2}^{c_2^+ (t)}\, \ell_{2}^+$ and 
$\check{\ell}_{2}^-(t):=R_{e_2}^{c_2^- (t)}\, \ell_{2}^-$. But the tangent lines $\check{\ell}_{2}^+(t)$ and $\check{\ell}_{2}^-(t)$ stay parallel to the plane $Oxz$ and the distance between them remains to be 2. 

\vskip .1cm
One may be tempted to think that each of thee groups of equalities (\ref{rela1})--(\ref{rela2}) is responsible for leaving fixed exactly one of three distances
$d_{\ell_{1}^{+},\ell_{1}^{-}}$, $d_{\ell_{2}^{+},\ell_{2}^{-}}$ and $d_{\ell_{3}^{+},\ell_{3}^{-}}$. This is however not the case.

\subsection{Second differentials}\label{secdifb}
We now consider the same combinations (\ref{depe1})--(\ref{depe3}) but for the coefficients in $t^2$.
Clearly, these combinations will contain only six parameters: $\omega$ and all
$c_{j1}^{\epsilon}$, $j=1,2,3,$ $\epsilon\in\{+,-\}$, except $c_{1 }^{+},$ which is fixed
to be zero. Explicitly (this is again a direct calculation) these combinations read
\begin{equation}
\begin{array}{rrl}
\Upsilon_1&:=&\frac{1}{2}\left( [d^2_{\ell_1^+,\ell_2^-}]_2+[d^2_{\ell_1^+,\ell_2^+}]_2+[d^2_{\ell_1^-,\ell_2^-}]_2+[d^2_{\ell_1^-,\ell_2^+}]_2\right)\\[1em]
&=&c_{1 }^- c_{2 }^+ -(c_{1 }^-)^2-c_{1 }^- c_{2 }^-+2c_{1 }^- \omega-2\omega^2\ .\end{array}
\label{55}\end{equation}
\begin{equation}
\begin{array}{rrl}
\Upsilon_2&:=&\frac{1}{2}\left( [d^2_{\ell_1^+,\ell_3^-}]_2+[d^2_{\ell_1^+,\ell_3^+}]_2+[d^2_{\ell_1^-,\ell_3^-}]_2+[d^2_{\ell_1^-,\ell_3^+}]_2\right)\\[1em]
&=&c_{1 }^- c_{3 }^+ - (c_{3 }^-)^2 - c_{1 }^- c_{3 }^- -(c_{3 }^+)^2\ ,\end{array}
\label{56}\end{equation}
\begin{equation}
\begin{array}{rrl}
\Upsilon_3&:=&\frac{1}{2}\left( [d^2_{\ell_3^-,\ell_2^-}]_2+ [d^2_{\ell_3^-,\ell_2^+}]_2+[d^2_{\ell_3^+,\ell_2^-}]_2+ [d^2_{\ell_3^+,\ell_2^+}]_2\right)\\[1em]
&=&c_{2 }^- c_{3 }^+ +c_{2 }^+ c_{3 }^--c_{2 }^- c_{3 }^- -c_{2 }^+ c_{3 }^+ -(c_{2 }^-)^2-(c_{2 }^+)^2\ .\end{array}
\label{57}\end{equation}

The distances will not decrease in the second order only if
\begin{equation}\Upsilon_1\geq 0\ ,\ \Upsilon_2\geq 0\ ,\ \Upsilon_3\geq 0\ .\label{22}\end{equation}
We will show now that the system $\left(  \ref{22}\right)  $ has only zero
solution. We rewrite it in the form
\begin{equation}\label{combsecor1}\omega^2+(\omega-c_{1 }^-)^2\leq c_{1 }^- (c_{2 }^+ - c_{2 }^-)\ ,\end{equation}
\begin{equation}\label{combsecor2}(c_{3 }^-)^2+(c_{3 }^+)^2\leq c_{1 }^- (c_{3 }^+ - c_{3 }^-)\ ,\end{equation}
\begin{equation}\label{combsecor3}(c_{2 }^-)^2+(c_{2 }^+)^2\leq (c_{2 }^- -c_{2 }^+) (c_{3 }^+ - c_{3 }^-)\ .\end{equation}
The left hand sides are non-negative. Taking the product of the inequalities (\ref{combsecor1}) and 
(\ref{combsecor2}), we find
$$(c_{1 }^-)^2 (c_{2 }^+ - c_{2 }^-)(c_{3 }^+ - c_{3 }^-)\geq 0\ .$$
Assume that $c_{1 }^-\neq 0$.
Then $(c_{2 }^+ - c_{2 }^-)(c_{3 }^+ - c_{3 }^-)\geq 0$. But
(\ref{combsecor3}) implies that $(c_{2 }^+ - c_{2 }^-)(c_{3 }^+ - c_{3 }^-)\leq 0$.
Therefore $(c_{2 }^+ - c_{2 }^-)(c_{3 }^+ - c_{3 }^-)= 0$. Now
it follows from (\ref{combsecor3}) that $c_{2 }^-=c_{2 }^+=0$. Then (\ref{combsecor1})
implies that $c_{1 }^- = 0$.

Thus, we have checked that $c_{1 }^- $ must be 0. Now (\ref{combsecor1}) implies that
\begin{equation}\label{doprel1}\omega=0\ ,\end{equation}
 (\ref{combsecor2}) implies that
 \begin{equation}\label{doprel2}c_{3 }^-=c_{3 }^+=0\end{equation}
 and then (\ref{combsecor3}) implies that
 \begin{equation}\label{doprel3}c_{2 }^-=c_{2 }^+=0\ .\end{equation}
The above computations show that along any path with tangent vector in the 6-dimensional subspace (\ref{rela1})-(\ref{rela2}) our
function decays as $t^{2}$.

\vskip .1cm
Together, equalities (\ref{rela1})--(\ref{rela2}) and (\ref{doprel1})--(\ref{doprel3}) show that order $t^1$ coefficients of all functions $a_j^\epsilon (t)$,
$b_j^\epsilon (t)$ and $c_j^\epsilon (t)$ vanish.

\vskip .1cm
This is not, however, the end of the story, since we do not have uniform estimates on the lengths of all the paths entering into
our argument. In general, it is possible that a $\mathcal{C}^{\infty}$ function decays along any analytic path starting at the origin, yet it
increases along a non-analytic path as the following example shows.

\vskip .1cm
\noindent{\bf Example.} Let us draw two graphs on the plane $\mathbb{R}^2$, of functions $f_1(x)=e^{-1/x}$ and $f_2(x)=e^{-2/x}$ for $x\geq 0$. An example is provided by an arbitrary $\mathcal{C}^{\infty}$ function in a vicinity of origin in $\mathbb{R}^2$ which increases in the horn between the graphs of $f_1$ and $f_2$ and decreases otherwise: for any analytic path, starting at the origin, there exists a duration when the path does not enter the interior of the horn. 

\vskip .1cm
So to complete the argument we use the theorem \ref{lq2b}, Section \ref{genforms}.

\subsection{Three forms}\label{threeforms} 
A straightforward calculation shows that each of the forms $\Upsilon_{1},\Upsilon_{2},\Upsilon_{3}$ has the matrix rank three.

\vskip .1cm
If there existed a negatively defined strictly convex combination of three forms
$\Upsilon_{1},\Upsilon_{2},\Upsilon_{3}$ then we could directly refer to Theorem 2, Section 5 of \cite{OS-C6} to finish the proof of Theorem \ref{localmaxO6}. 
Besides, an existence of such combination would give an easier proof of the statement that the system $\Upsilon_{1}\geq 0$, $\Upsilon_{2}\geq 0$ and $\Upsilon_{3}\geq 0$ admits only a trivial solution. However such combination does not exist as we will now show.

\begin{proposition} There is no positively defined convex linear combination of the three forms $\Upsilon_{1},\Upsilon_{2}$ and $\Upsilon_{3}$. 
\end{proposition}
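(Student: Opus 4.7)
The plan is to disprove positive definiteness of every convex combination $\lambda_1\Upsilon_1+\lambda_2\Upsilon_2+\lambda_3\Upsilon_3$ with $\lambda_i \geq 0$ not all zero, by producing three ``diagnostic'' vectors $v_1, v_2, v_3$ in the six-dimensional space $E$ such that $\Upsilon_i(v_i) < 0$ while $\Upsilon_j(v_i) = 0$ for $j \neq i$. Once these are in hand, any such convex combination fails positive definiteness: picking an index $i$ with $\lambda_i > 0$, evaluation at $v_i$ gives the strictly negative value $\lambda_i \Upsilon_i(v_i)$.

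The first step is to inspect the formulas (\ref{55})--(\ref{57}) and exploit their nearly disjoint variable structure to find the vectors $v_i$. The coordinate $\omega$ appears only in $\Upsilon_1$, contributing the diagonal term $-2\omega^2$; the coordinate $c_3^-$ appears diagonally in $\Upsilon_2$ as $-(c_3^-)^2$ and in $\Upsilon_3$ only through cross terms with $c_2^\pm$; and the coordinate $c_2^-$ appears diagonally in $\Upsilon_3$ as $-(c_2^-)^2$ and in $\Upsilon_1$ only through the cross term $-c_1^- c_2^-$. Setting all other coordinates to zero in each case will isolate the desired diagonal negative contribution.

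The second step is the direct verification. In the coordinates $(\omega, c_1^-, c_2^+, c_2^-, c_3^+, c_3^-)$ I will test
\[ v_1 = (1,0,0,0,0,0),\quad v_2 = (0,0,0,0,0,1),\quad v_3 = (0,0,0,1,0,0), \]
substitute into (\ref{55})--(\ref{57}), and check that $\Upsilon_i(v_j) = -\delta_{ij}\, r_i$ with $r_1 = 2$ and $r_2 = r_3 = 1$. The matrix of values $\bigl(\Upsilon_i(v_j)\bigr)_{i,j}$ is then diagonal with strictly negative entries, which suffices to conclude.

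I anticipate no real obstacle: once the three test vectors are identified, the verification reduces to a single substitution in each of the three formulas. The only conceptual point is recognizing that such ``clean'' diagnostic vectors exist, which is guaranteed by the coordinate disjointness already visible in the first-order analysis of Subsection \ref{secdif}.
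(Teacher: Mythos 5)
Your computations check out: with $v_1=(1,0,0,0,0,0)$, $v_2=(0,0,0,0,0,1)$, $v_3=(0,0,0,1,0,0)$ in the coordinates $(\omega,c_1^-,c_2^+,c_2^-,c_3^+,c_3^-)$ one indeed gets $\Upsilon_i(v_j)=-\delta_{ij}r_i$ with $r_1=2$, $r_2=r_3=1$, and this shows that no convex combination $\sum\lambda_a\Upsilon_a$ is positive definite (in fact the single vector $v_1$ already suffices, since positive definiteness fails as soon as the value at one nonzero vector is $\leq 0$). So you have proved the Proposition as it is literally worded.

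The problem is that the literal wording is a sign slip, and the claim the paper actually needs and proves is the opposite one. The sentence introducing the Proposition (``if there existed a \emph{negatively} defined strictly convex combination of $\Upsilon_1,\Upsilon_2,\Upsilon_3$ \dots\ however such combination does not exist'') and the paper's own proof, which assumes that $\Upsilon:=-\bigl(\widetilde{\Upsilon}_1+\alpha\Upsilon_2+\beta\Upsilon_3\bigr)$ is positively defined and derives a contradiction, show that the intended statement is: no convex combination of $\Upsilon_1,\Upsilon_2,\Upsilon_3$ is negatively defined, i.e.\ no convex combination of $-\Upsilon_1,-\Upsilon_2,-\Upsilon_3$ is positively defined. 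Your diagnostic vectors say nothing about this: the relations $\Upsilon_i(v_i)<0$ and $\Upsilon_j(v_i)=0$ are perfectly compatible with $\sum\lambda_a\Upsilon_a$ being negative definite when all $\lambda_a>0$. Moreover, the three-vector trick cannot be repaired for the intended claim: a single witness $w\neq 0$ with $\Upsilon_a(w)\geq 0$ for all $a$ would be a nontrivial solution of the system (\ref{22}), which Subsection \ref{secdifb} shows does not exist, so any witness against negative definiteness must depend on the coefficients of the combination. That is precisely what the paper's argument supplies: it completes the square in $\omega$ to reduce to five variables, writes the Gram matrix of $-(\widetilde{\Upsilon}_1+\alpha\Upsilon_2+\beta\Upsilon_3)$, and shows via the Sylvester criterion that the resulting inequalities in $\alpha,\beta$ (in particular $\beta>1$ together with $\alpha-3\alpha\beta+\alpha^2\beta+\beta^2<0$, whose discriminant in $\alpha$ is $-(\beta-1)^2(4\beta-1)<0$) are incompatible. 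You would need to redo the argument along these lines to establish the statement the paper actually relies on.
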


\begin{proof}
Let $\widetilde{\omega}=\omega-\frac{1}{2}c_{1 }^-$. We have
$$\Upsilon_1=\widetilde{\Upsilon}_1-2\widetilde{\omega}^2\ ,$$
where
$$\widetilde{\Upsilon}_1=c_{1 }^- c_{2 }^+ -\frac{1}{2}(c_{1 }^-)^2-c_{1 }^- c_{2 }^-\ .$$
The variable $\widetilde{\omega}$ is not involved in the forms $\Upsilon_2$ and $\Upsilon_3$. Therefore, a convex combination of the forms $(-\Upsilon_1)$, $(-\Upsilon_2)$ and 
$(-\Upsilon_3)$ is positively defined on the subspace with coordinates $\{\widetilde{\omega}, c_{1 }^- , c_{2 }^+ ,c_{2 }^- ,c_{3 }^+ ,c_{3 }^-\}$ if and only if the same 
convex combination of the forms $(-\widetilde{\Upsilon}_1)$, $(-\Upsilon_2)$ and 
$(-\Upsilon_3)$ is positively defined on the five-dimensional subspace with coordinates $\{ c_{1 }^- , c_{2 }^+ ,c_{2 }^- ,c_{3 }^+ ,c_{3 }^-\}$. Thus it is sufficient to consider only this 
five-dimensional space. 

\vskip .1cm
Assume that a combination 
$$\Upsilon:= -\bigl( \widetilde{\Upsilon}_{1}+\alpha\Upsilon_{2}+\beta\Upsilon_{3}\bigr)\ ,$$
where $\alpha,\beta>0$, is positively defined; without loss of generality we fixed the coefficient of the form $(-\widetilde{\Upsilon}_{1})$ to be 1.

\vskip .1cm 
In the coordinates $\{ c_{1 }^- , c_{2 }^+ ,c_{2 }^- ,c_{3 }^+ ,c_{3 }^-\}$ the form $\Upsilon$ has the following Gram matrix:

$$\frac{1}{2}\left(\begin{array}{rrrrr}
1&-1&1&-\alpha&\alpha\\
-1&2\beta&0&\beta&-\beta\\
1&0&2\beta&-\beta&\beta\\
-\alpha&\beta&-\beta&2\alpha&0\\
\alpha&-\beta&\beta&0&2\alpha
\end{array}\right)\ .$$

\vskip .3cm
The Sylvester criterion says that the positivity of the form $\Upsilon$ is equivalent to the following system of inequalities 
\begin{equation}\label{sylvsyst}\begin{array}{c}
2\beta-1>0\ ,\ 4\beta(\beta-1)>0\ ,\ -4\beta(2\alpha-4\alpha\beta+\alpha^2\beta+\beta^2)>0\ ,\\[1em]
 -16\alpha\beta(\alpha-3\alpha\beta+\alpha^2\beta+\beta^2)>0\ .\end{array}\end{equation}
Taking into account that the coefficients $\alpha$ and $\beta$ are positive, the system (\ref{sylvsyst}) reduces to the system  
$$\beta>1\ ,\ 2\alpha-4\alpha\beta+\alpha^2\beta+\beta^2<0\ ,\ \alpha-3\alpha\beta+\alpha^2\beta+\beta^2<0\ ,$$
which is incompatible. Already the first and the third inequalities are not compatible. Indeed, consider the left hand side 
$$\mathsf{m}:=\alpha-3\alpha\beta+\alpha^2\beta+\beta^2$$ of the third inequality as the polynomial in $\alpha$. The quadratic polynomial $\mathsf{m}$ can take a negative 
value only if its roots are real. However the discriminant of the polynomial  $\mathsf{m}$ is 
$$-(\beta-1)^2(4\beta-1)\ ,$$
which is negative for $\beta>1$.
\end{proof}

\section{Sufficient condition}\label{genforms}
Let $\{F_{1}\left(  x\right)  ,\dots,F_{m}\left(  x\right)\}$ be a family of functions $\mathsf{U}\to \mathbb{R}$, where 
$\mathsf{U}\subset\mathbb{R}^n$  is a neighborhood of the origin $0\in\mathbb{R}^n$, such that $F_u(0)=0$, $u=1,\dots ,m$.
We assume that the number of functions is not greater that the number of variables, $m\leq n$.

\vskip .1cm
For the configurations of tangent lines in Theorem \ref{localmaxO6} 
the functions $F_u$ are the differences between the squares of distances in the perturbed and non-perturbed configurations.

\vskip .1cm
We are studying the function
\[ {\sf F}\left(  x\right)  :=\min\left\{  F_{1}\left(  x\right)
,\dots,F_{m}\left(x\right)  \right\} \ .\]

In \cite{OS-C6} we have proved the local maximality of the configuration $C_{\mathfrak{m}}$. For the configuration $C_{\mathfrak{m}}$
there is exactly one convex linear dependency between the differentials of the functions $F_u(x)$, $u=1,\dots ,m$, at the origin. We have given in \cite{OS-C6} a sufficient condition ensuring that the point $\mathbf{0}\in\mathbb{R}^{n}$ is a sharp local maximum
of the function ${\sf F}\left(  x\right)$.

\vskip .1cm
As we have seen in Section \ref{maxconfO6}, the space of linear dependencies between $dF_u(0)$, $u=1,\dots ,m$, is three-dimensional and 
has a basis consisting of three convex dependencies. 
Moreover in our coordinates the groups of coordinates entering these linear combinations are disjoint.

\vskip .1cm
In this section we establish an analytic result, Theorem \ref{lq2b}, needed to complete the proof of Theorem \ref{localmaxO6}. 
Theorem \ref{lq2b} is a sufficient condition, applicable to the configuration $O_6$, which ensures that the point $\mathbf{0}\in\mathbb{R}^{n}$ is a sharp local maximum of the function ${\sf F}\left(  x\right)$. Theorem \ref{lq2b} is a generalization of Theorem 2, Section 5 in \cite{OS-C6}.

\subsection{Notation}\label{formsO6}
We recall some notation from \cite{OS-C6}. Till the end of the Section the summation over repeated indices is assumed.

\vskip .1cm
We denote by $l_{uj}$ and $q_{ujk}$ the coefficients of the linear and quadratic parts of the function $F_u(x)$, $u=1,\dots,m$, 
$$F_u(x)=l_{uj}x^j+q_{ujk}x^j x^k+o(2)\ ,$$
where $o(2)$ stands for higher order terms. 

\vskip .1cm
Let $\xi^j:=dx^j$, $j=1,\dots,n$, be the coordinates, corresponding to the coordinate system $x^1,\dots,x^n$, in the tangent space 
to  $\mathbb{R}^n$ at the origin. We define the linear and quadratic forms $l_u(\xi)\equiv l_{uj}\xi^j$ and $q_u(\xi)\equiv q_{ujk}\xi^j\xi^k$ on the 
tangent space  $T_0\mathbb{R}^n$.

\vskip .1cm
Let $E$ be the subspace in $T_0\mathbb{R}^n$ defined as the intersection of kernels of the linear forms $l_u(\xi)$, 
$$E=\bigcap_{u=1}^m\; \ker l_u(\xi)\ .$$

\vskip .1cm
Let $\mu=\{\mu^1\,\dots ,\mu^m\}$ be a linear dependency between the linear parts of the functions $F_u(x)$, $u=1,\dots,m$, that is, 
$$\mu^u l_{uj}=0\ \ \text{for all}\ j=1,\ldots,n\ .$$
We denote by $\mathfrak{q}[\mu ]$ the corresponding quadratic form on the space $E$ defined by
$$\mathfrak{q}[\mu ]=\mu^u q_{ujk}\xi^j\xi^k\vert_E\ .$$

\subsection{Positively defined families of quadratic forms}
We shall say that a family $\{\mathfrak{Q}_1,\ldots ,\mathfrak{Q}_L\}$ of quadratic forms on a real vector space $\mathbb{V}$
is positively defined if the following condition holds
\begin{equation}\label{mima2}\begin{array}{l}
\text{the system of inequalities}\ \ 
\mathfrak{Q}_u(x)\leq 0\ ,\ u=1,\ldots ,L,\\[.6em]
\text{admits only the trivial solution $x=0$}\ . \end{array}
\end{equation}
Also, we say that a family $\{\mathfrak{Q}_1,\ldots ,\mathfrak{Q}_L\}$ of quadratic forms on a space $\mathbb{V}$
is negatively defined if  the family $\{ -\mathfrak{Q}_1,\ldots ,-\mathfrak{Q}_L\}$ is positively defined. 

\vskip .1cm
The notion of a positively defined family of quadratic forms generalizes the notion of a positively defined quadratic form (it corresponds
to $L=1$). 

\vskip .1cm
Let
$$\mathfrak{Q}(x):=\max \bigl(\mathfrak{Q}_1(x),\ldots ,\mathfrak{Q}_L(x)\bigr)\ .$$
The condition (\ref{mima2}) is satisfied if and only if the constant 
$$\mathfrak{v}:=\min_{\left\Vert
x\right\Vert =1} \bigl(\mathfrak{Q}(x)\bigr)$$
is positive, $\mathfrak{v}>0$. Because of the homogeneity we have 
$$\mathfrak{Q}(x)\geq \mathfrak{v} \left\Vert x\right\Vert^2\ \text{for any}\ x\in\mathbb{R}^n.$$
So we can reformulate the positivity of a family in the following form. 

\begin{definition}\label{depofa}
A family $\{\mathfrak{Q}_1,\ldots ,\mathfrak{Q}_L\}$ of quadratic forms is positively defined iff there exists a positive constant $\mathfrak{v}>0$ such that for any $x\in\mathbb{R}^n$ 
\begin{equation}\label{suschac}
\text{there exists $a_\circ \in\{1,\dots ,L\}$
such that $\mathfrak{Q}_{a_\circ}(x)\geq \mathfrak{v} \left\Vert x\right\Vert^2$.}
\end{equation}
We shall say that such family $\{\mathfrak{Q}_1,\ldots ,\mathfrak{Q}_L\}$ is $\mathfrak{v}$-positively defined.
\end{definition}

As for $L=1$, the positivity of a family of quadratic forms is an open condition in the following sense. 

\begin{lemma}\label{inchava}
The condition (\ref{mima2}) is stable under small perturbations of the forms of the family. 
\end{lemma}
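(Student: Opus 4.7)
The plan is to leverage the quantitative reformulation of positive definiteness given by Definition \ref{depofa}. By hypothesis the family $\{\mathfrak{Q}_1,\ldots,\mathfrak{Q}_L\}$ is $\mathfrak{v}$-positively defined for some $\mathfrak{v}>0$: for every $x\in\mathbb{R}^n$ there exists an index $a_\circ\in\{1,\ldots,L\}$ with $\mathfrak{Q}_{a_\circ}(x)\geq \mathfrak{v}\|x\|^2$. The point of the lemma is that this condition survives after replacing each $\mathfrak{Q}_u$ by $\widetilde{\mathfrak{Q}}_u=\mathfrak{Q}_u+\eta_u$ provided the perturbations $\eta_u$ are small.

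First I would fix a norm on the finite-dimensional space of quadratic forms on $\mathbb{R}^n$, say the operator norm $\|\mathfrak{Q}\|:=\sup_{\|x\|=1}|\mathfrak{Q}(x)|$, so that $|\mathfrak{Q}(x)|\leq \|\mathfrak{Q}\|\cdot\|x\|^2$ for all $x$. Since all norms on a finite-dimensional space are equivalent, the precise choice is immaterial and the notion of ``small perturbation'' is unambiguous.

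Next, given perturbed forms $\widetilde{\mathfrak{Q}}_u=\mathfrak{Q}_u+\eta_u$ with $\|\eta_u\|<\mathfrak{v}/2$ for every $u$, I would pick an arbitrary $x\in\mathbb{R}^n$ and select an index $a_\circ$ with $\mathfrak{Q}_{a_\circ}(x)\geq \mathfrak{v}\|x\|^2$. Then
$$\widetilde{\mathfrak{Q}}_{a_\circ}(x)=\mathfrak{Q}_{a_\circ}(x)+\eta_{a_\circ}(x)\geq \mathfrak{v}\|x\|^2-\|\eta_{a_\circ}\|\cdot\|x\|^2\geq \tfrac{\mathfrak{v}}{2}\|x\|^2\ .$$
By Definition \ref{depofa} this means $\{\widetilde{\mathfrak{Q}}_1,\ldots,\widetilde{\mathfrak{Q}}_L\}$ is $(\mathfrak{v}/2)$-positively defined, and in particular positively defined, which is the desired stability.

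There is no serious obstacle. The key step is the passage from the qualitative condition (\ref{mima2}) to the uniform, quantitative condition of Definition \ref{depofa}, which is where the compactness of the unit sphere enters (through the existence of the minimum $\mathfrak{v}=\min_{\|x\|=1}\mathfrak{Q}(x)$). Once the positive lower bound $\mathfrak{v}\|x\|^2$ is in hand, the stability is a one-line estimate using the triangle inequality. The only care needed is to choose the same index $a_\circ$ for the perturbed form as for the unperturbed one at each point $x$, rather than trying to match indices globally.
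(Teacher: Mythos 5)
Your proof is correct and follows essentially the same route as the paper's: both pass to the quantitative $\mathfrak{v}$-positively defined formulation of Definition \ref{depofa}, pick for each $x$ the index $a_\circ$ furnished by the unperturbed family, and conclude by a one-line triangle-inequality estimate. The only (cosmetic) difference is that you bound the perturbations by $\mathfrak{v}/2$ in the operator norm directly, whereas the paper writes the perturbation as $\epsilon\,\mathfrak{P}_u$ with $\epsilon$ small.
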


\begin{proof} Assume that a family $\{\mathfrak{Q}_1,\ldots ,\mathfrak{Q}_L\}$ of quadratic forms is positively defined and 
let $\mathfrak{v}$ be a constant from Definition \ref{depofa}.

\vskip .1cm 
Let $\mathfrak{P}_u$, $u=1,\ldots ,L$, be an arbitrary family of quadratic forms. There exists a positive constant $\mathfrak{w}$ such that
$$\vert\mathfrak{P}_u(x)\vert\leq \mathfrak{w} \left\Vert x\right\Vert^2\ ,\ u=1,\ldots ,L\ .$$
Given $x\in\mathbb{R}^n$, let $a_{\circ}$ be the index defined by (\ref{suschac}).  For a positive $\epsilon$ we have 
$$\vert \mathfrak{Q}_{a_{\circ}}(x)+\epsilon\,\mathfrak{P}_{a_{\circ}}(x)\vert \geq \vert \mathfrak{Q}_{a_{\circ}}(x)\vert- 
\epsilon\,\vert\mathfrak{P}_{a_{\circ}}(x)\vert
\geq \left( \mathfrak{v}-\epsilon \mathfrak{w}\right) \left\Vert x\right\Vert^2\ ,$$
therefore, the family $\{\mathfrak{Q}_u+\epsilon\,\mathfrak{P}_u\, ,\, u=1,\ldots ,L\}$ satisfies the condition of Definition \ref{depofa} for $\epsilon$
small enough.
\end{proof}

\subsection{Analytic theorem}\label{anareO6}
The particularity of the situation analyzed in Subsections \ref{secdif} and \ref{secdifb} can be described as follows. 
The family of functions $\{ F_1(x),\dots,F_m(x)\}$ splits into several subfamilies 
$$\mathcal{F}_1=\{ F_{1,1}(x),\dots,F_{1,m_1}(x)\}\ ,\ \dots\ ,\ \mathcal{F}_L=\{ F_{L,1}(x),\dots,F_{L,m_L}(x)\}$$ such that:
\begin{itemize}
\item[(A)] In each subfamily $ \mathcal{F}_a$ there is exactly one linear dependency $\lambda_a$ between the linear parts of the functions in the subfamily, 
and this dependency is convex for each subfamily.
\item[(B)] The set of variables $x^1,\ldots ,x^n$ is a union of disjoint sets $\mathcal{X}_a$, $a=1,\dots,L$, and a set $\mathcal{Y}$ with the following property: the linear parts of functions from the subfamily $\mathcal{F}_a$ depend only on the variables from the set 
$\mathcal{X}_a$ for each $a=1,\dots,L$.   
\item[(C)]The family of quadratic forms $\mathfrak{q}[\lambda_a ]$, $a=1,\dots ,L$
is negatively defined on the subspace $E$. 
\end{itemize}

\noindent We shall use the following notation for the variables from the subsets $\mathcal{X}_a$ and $\mathcal{Y}$:
$$ \mathcal{X}_a=\{ x_a^1,\dots ,x_a^{d_a}\}\ ,\ a=1,\dots,L\ ,\ \ \text{and}\ \  \mathcal{Y}=\{ y^1,\dots ,y^{d}\}\ .$$
In particular, $d_1+\ldots +d_L+d=n$. The variables $\{ y^1,\dots ,y^{d}\}$ do not enter the linear parts of functions $\{ F_1(x),\dots,F_m(x)\}$. 

\begin{lemma}\label{inchavab}
The conditions {\rm (A)} and {\rm (C)} are invariant under an arbitrary analytic change of variables, preserving the origin, such that the linear parts 
transform inside each group $\mathcal{X}_a$ of variables
\begin{equation}\label{chavar}x_a^j=(A_a)^j_k\tilde{x}_a^k+ o(1)\ .\end{equation}
Here each of matrices $A_a$, $a=1,\ldots ,L$, is non-degenerate.\end{lemma}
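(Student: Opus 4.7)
The plan is to verify (A) and (C) separately, exploiting the fact that the linear part of the change of variables (\ref{chavar}) carries $\tilde{\mathcal X}_a$ into $\mathcal X_a$ via the non-degenerate matrix $A_a$.

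For condition (A), I would compute that the linear coefficients $l_{a,u,j}$ of $F_{a,u}$ transform to $\tilde l_{a,u,k} = l_{a,u,j}(A_a)^j_k$ in the new coordinates. Since $A_a$ is non-degenerate, the equations $\mu^u l_{a,u,j}=0$ (for all $j$) and $\mu^u \tilde l_{a,u,k}=0$ (for all $k$) have the same solution set in $(\mu^u)$. Hence the space of linear dependencies within $\mathcal F_a$ is preserved, as is its dimension (\emph{exactly one}) and the convexity of the spanning vector, since the very same coefficients $\mu^u$ describe the dependency in both coordinate systems. Moreover, because the $\mathcal X_a$-block of the linear part of the change of variables does not mix with other groups, the transformed linear parts of functions in $\mathcal F_a$ still involve only $\tilde{\mathcal X}_a$, so the disjointness assumption of (B) is also maintained.

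For condition (C), the computation is slightly more delicate because the change of variables may have a nontrivial quadratic part. Writing $x^j = A^j_k\tilde x^k + B^j_{kl}\tilde x^k\tilde x^l + \ldots$, a Taylor expansion yields the transformed quadratic coefficients $\tilde q_{u,kl} = l_{u,j} B^j_{kl} + q_{u,pq}A^p_k A^q_l$. Forming the linear combination with the dependency coefficients $\mu^u$ of $\lambda_a$ annihilates the first summand (because $\mu^u l_{u,j}=0$), so $\mathfrak q[\tilde\lambda_a](\tilde\xi) = \mathfrak q[\lambda_a](A\tilde\xi)$. The linear map $A$ restricts to an isomorphism $\tilde E\to E$, because it preserves the kernels of all $l_u$ and is non-degenerate; hence the restriction of $\mathfrak q[\tilde\lambda_a]$ to $\tilde E$ is the pullback via this isomorphism of $\mathfrak q[\lambda_a]|_E$. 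Negative-definiteness of a family of quadratic forms, in the sense of Definition \ref{depofa}, is invariant under a linear isomorphism of the underlying space, so (C) transfers.

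The only conceptual point to verify, which is essentially the whole content of the lemma, is that the quadratic part $B^j_{kl}$ of the change of variables enters $\tilde q_{u,kl}$ solely through $l_{u,j} B^j_{kl}$, a term that is killed by any dependency $\mu^u$ of the linear parts. This is exactly why the hypothesis that $\lambda_a$ is a linear dependency of $l_{a,u,j}$ (not merely a convex combination of the functions themselves) is essential; everything else reduces to standard linear algebra with non-degenerate maps, and no genuine obstacle arises.
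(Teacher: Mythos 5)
Your proposal is correct and follows the standard route: the paper itself gives no details, merely stating that the lemma is ``a straightforward generalization of the proof of Lemma 3, Section 5, in \cite{OS-C6}'', and your computation --- the linear dependencies $\mu^u$ are unchanged since $\tilde l_{a,u,k}=l_{a,u,j}(A_a)^j_k$ with $A_a$ invertible, while the quadratic part $B^j_{kl}$ of the coordinate change contributes to $\tilde q_{u,kl}$ only through $l_{u,j}B^j_{kl}$, which is annihilated by any dependency, so $\mathfrak q[\lambda_a]\vert_E$ transforms by pullback along the linear isomorphism $\tilde E\to E$ --- is exactly that generalization spelled out. You correctly identify the one genuinely substantive point (why the nonlinear terms of the change of variables do not spoil condition (C)), so nothing is missing.
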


\begin{proof} This is a straightforward generalization of the proof of Lemma 3, Section 5, in \cite{OS-C6}.\end{proof}

\vskip .3cm
We now formulate our analytic theorem.
\begin{theorem}\label{lq2b}
Under the conditions {\rm (A)}, {\rm (B)} and {\rm (C)}, the origin is the strict local maximum of the function ${\sf F}(x)$.
\end{theorem}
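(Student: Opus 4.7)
The plan is to generalize the strategy of the $L=1$ theorem of \cite{OS-C6} to $L$ block-wise convex dependencies, exploiting the disjoint-variable structure (B) to reduce everything to an orthogonal block decomposition. I would first invoke Lemma \ref{inchavab} to put the linear parts inside each block $\mathcal{X}_a$ into standard form, so that $E_a:=\bigcap_{i}\ker l_{a,i}|_{\mathcal{X}_a}$ is a coordinate subspace of $\mathcal{X}_a$ of dimension $d_a-m_a+1$. By condition (B) the linear forms from different subfamilies involve disjoint variables, so $E = E_1\oplus\cdots\oplus E_L\oplus\mathcal{Y}$ as subspaces of $T_0\mathbb{R}^n$. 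Any small $x$ decomposes as $x = x_1+\cdots+x_L+y$ with $x_a\in\mathcal{X}_a$ and $y\in\mathcal{Y}$, and I further split $x_a = x_a^E + x_a^\perp$ with $x_a^E\in E_a$ and $x_a^\perp\in E_a^\perp\cap\mathcal{X}_a$.

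The next step is a linear lower bound within each block. Since the coefficients $\lambda_a^i$ of the unique convex dependency are strictly positive and $\sum_i\lambda_a^i l_{a,i}=0$, the forms $l_{a,i}$ cannot all be non-negative at a nonzero argument, so for every $x_a^\perp\neq 0$ at least one $l_{a,i}(x_a^\perp)$ is strictly negative. Compactness of the unit sphere in $\mathcal{X}_a/E_a$ produces constants $c_a>0$ with
$$\min_i l_{a,i}(x) \;=\; \min_i l_{a,i}(x_a^\perp) \;\leq\; -c_a\,\|x_a^\perp\|,$$
and combining with the uniform Taylor estimate $|F_{a,i}(x)-l_{a,i}(x)|\leq C_1\|x\|^2$ gives $\min_i F_{a,i}(x)\leq -c_a\|x_a^\perp\| + C_1\|x\|^2$.

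Now I carry out the dichotomy. Set $s_a:=\|x_a^\perp\|$ and choose $K$ large so that $c_aK > C_1$ for every $a$. If some $s_a\geq K\|x\|^2$, the display above yields ${\sf F}(x)\leq -(c_aK-C_1)\|x\|^2 < 0$ immediately. Otherwise every $s_a<K\|x\|^2$, so for $\|x\|$ small enough the projection $\xi^E:=x_1^E+\cdots+x_L^E+y\in E$ satisfies $\|\xi^E\|^2 = \|x\|^2 - \sum_a s_a^2 \geq \tfrac12\|x\|^2$. The convex combination kills linear terms:
$$\sum_i \lambda_a^i F_{a,i}(x) \;=\; \sum_i \lambda_a^i q_{a,i}(x) + o(\|x\|^2) \;=\; \mathfrak{q}[\lambda_a](\xi^E) + o(\|x\|^2),$$
where the passage from the extended quadratic form at $x$ to its value at $\xi^E$ introduces cross terms bounded by $C\|\xi^E\|\cdot\|x^\perp\|=O(\|x\|^3)$ and pure $x^\perp$-terms of order $O(\|x\|^4)$, both absorbed into $o(\|x\|^2)$. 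Condition (C) supplies an index $a_\circ$ with $\mathfrak{q}[\lambda_{a_\circ}](\xi^E)\leq -\mathfrak{v}\|\xi^E\|^2\leq -\tfrac{\mathfrak{v}}{2}\|x\|^2$, so $\sum_i \lambda_{a_\circ}^i F_{a_\circ,i}(x)<0$ for $x$ small. Since each $\lambda_{a_\circ}^i>0$, at least one $F_{a_\circ,i}(x)<0$, giving ${\sf F}(x)<0$ in this regime as well.

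The main obstacle is coupling the two regimes uniformly in the direction of $x$. The threshold $K$ must be chosen \emph{after} the constants $c_a$ and $C_1$, and only then the neighborhood may be shrunk so that the bound $\|\xi^E\|^2\geq\tfrac12\|x\|^2$ and the $o(\|x\|^2)$ error control in the quadratic regime both hold with uniform modulus. Condition (B) is precisely what enables this: the disjointness of the variable groups makes the decomposition $x=\sum_a x_a+y$ blockwise and orthogonal, so the $\|x^\perp\|$ that appears in the cross-term bound really is $O(\|x\|^2)$ whenever every $s_a$ is, allowing condition (C) on $E$ to finish the job.
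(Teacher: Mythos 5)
Your proof is correct, but it takes a genuinely different route from the one in the paper. The paper first normalizes coordinates via Lemma \ref{inchavab} so that in each subfamily all functions but one become purely linear, $F_{a,i}=x_a^i$ for $i\geq 2$; off the orthant $\{x_a^i\geq 0\}$ the minimum is trivially negative, and on the orthant the substitution $x_a^i=(z_a^i)^2$ converts the surviving linear parts $-\sum_{i\geq 2}\lambda_a^i x_a^i$ into negative squares, so each essential function becomes $\mathfrak{Q}_a=-\sum_{i\geq 2}\lambda_a^i(z_a^i)^2+\mathfrak{q}_a(\mathsf{y})$ plus an $o(2)$ remainder; condition (C) then makes the family $\{\mathfrak{Q}_a\}$ negatively defined on the whole $z$-space, and stability of that property under small perturbations (the mechanism of Lemma \ref{inchava}) finishes everything in one stroke with a single uniform constant $\mathfrak{v}$. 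You instead keep the two scales separate: the blockwise orthogonal splitting $x=\xi^E+x^\perp$ (available thanks to (B)) and the dichotomy on whether some $\|x_a^\perp\|\geq K\|x\|^2$ isolate a linear regime, where one $l_{a,i}$ is at most $-c_aK\|x\|^2$ and dominates the Taylor error, from a quadratic regime, where $\|x^\perp\|=O(\|x\|^2)$, the convex combination collapses to $\mathfrak{q}[\lambda_a](\xi^E)+o(\|x\|^2)$, and (C) forces some $F_{a_\circ,i}<0$. Your ordering of the constants ($c_a$ and $C_1$ first, then $K$, then the neighborhood) is the right one and the estimates are uniform, so there is no gap. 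What the paper's substitution buys is economy: both regimes are handled simultaneously by one homogeneity argument and the bound ${\sf F}\leq-\mathfrak{v}\|z\|^2$ comes for free. What your dichotomy buys is that the two decay rates stated in Remark 2 after the theorem (linear off $E$, quadratic along $E$) are read off directly from the two regimes. One small caveat: your compactness bound $\min_i l_{a,i}(x_a^\perp)\leq -c_a\|x_a^\perp\|$ requires every coefficient $\lambda_a^i$ to be strictly positive, not merely nonnegative; this is indeed what the paper means by a convex dependency (its own normal form assumes $\lambda_a^i>0$), so you are consistent with the hypotheses as they are used, but it is worth stating explicitly.
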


\vskip .1cm
\noindent{\bf Remarks.} 

\vskip .3cm
\noindent{\bf 1.}
Our particular case of the configuration $O_6$ corresponds to $L=3$; we have the following subfamilies of functions
$$\begin{array}{c}
\mathcal{F}_1=\left\{ d^2_{\ell_1^+,\ell_2^-}\ ,\ d^2_{\ell_1^+,\ell_2^+}\ ,\ d^2_{\ell_1^-,\ell_2^-}\ ,\ d^2_{\ell_1^-,\ell_2^+}\right\}\ ,\\[1em]
\mathcal{F}_2=\left\{ d^2_{\ell_1^+,\ell_3^-}\ ,\ d^2_{\ell_1^+,\ell_3^+}\ ,\ d^2_{\ell_1^-,\ell_3^-}\ ,\ d^2_{\ell_1^-,\ell_3^+}\right\}\ ,\\[1em]
\mathcal{F}_3=\left\{ d_{\ell_{2}^{-},\ell_{3}^{-}}^{2}\ ,\ d_{\ell_{2}^{-},\ell_{3}^{+}}^{2}\ ,\ d_{\ell_{2}^{+},\ell_{3}^{-}}^{2}\ ,\ d_{\ell_{2}^{+},\ell
_{3}^{+}}^{2}\right\}\ .\end{array}$$ 
Each subfamily contains four functions. 

\vskip .1cm
The property (A) refers to formulas (\ref{depe1})--(\ref{depe3}). 

\vskip .1cm
For the property (B) see expressions (\ref{disjvar}).  

\vskip .1cm
The property (C) is the statement about three quadratic forms $\Upsilon_{1},\Upsilon
_{2},\Upsilon_{3}$ defined by formulas $\left(  \ref{55})-(\ref{57}\right)$: we have checked in Subsection \ref{secdif} that 
$\min\left(  \Upsilon_{1},\Upsilon_{2},\Upsilon_{3}\right)\,\rule[-.22cm]{0.1mm}{.54cm}_{\; E}  <0$ everywhere
except the origin.

\vskip .3cm
\noindent{\bf 2.}
Similarly to the case of the configuration $C_{\mathfrak{m}}$, it follows from the proof that the function ${\sf F}(x)$ decays quadratically at zero 
along any direction in $E$ and decays linearly along any direction outside $E$.

\vskip .1cm
As for Theorem 2, Section 5, from \cite{OS-C6}, we need the assertion of Theorem \ref{lq2b} for a family of analytic functions $F_j$; 
however, a careful analysis shows that the assertion of Theorem \ref{lq2b} holds for functions $F_j$ of the class $\mathcal{C}^{3}$.

\vskip .1cm
In \cite{OS-C6} we have given two different proofs of Theorem 2. Here we present an analogue of the first proof in \cite{OS-C6}. 
A proof of Theorem \ref{lq2b} generalizing the second proof of Theorem 2 in \cite{OS-C6} can be given as well, but it looks less natural 
and more involved, so we have decided to omit it. 

\subsection{Proof}\label{fpfO6}
We proceed as in the first proof of Theorem 2 in \cite{OS-C6}. Performing, if necessary, a suitable change of variables,  
satisfying the conditions of Lemma \ref{inchavab}, we may assume that each subfamily $ \mathcal{F}_a$, $a=1,\dots,L$, 
consists of linear functions, except for the first one,   
$$\begin{array}{c}F_{a,1}=-\sum_{i=2}^{m_a}\lambda^i_{a} x_a^i+q_a(x)+o(2)\ \ \text{where}\ \lambda^i_{a}>0\ \ \text{for}\ i=2,\dots ,m_a\ ,\\[.4em]
F_{a,2}=x_a^2\ ,\ \dots \ ,\ F_{a,m_a}=x_a^{m_a}\ .\end{array}$$
The set of variables is split into two disjoint parts, 
$$\mathcal{V}_1:=\{x_a^t\}_{t=2,\dots,m_a}^{a=1,\dots,L}\ \text{and}\ \mathcal{V}_2:=\{x_a^1\}_{a=1,\ldots ,L}\sqcup \mathcal{Y}\ .$$
We rename, for convenience, the variables from $\mathcal{V}_2$:
$ \mathcal{V}_2=\{\mathsf{y}^1,\ldots ,\mathsf{y}^{K}\}$.
We identify the points of the tangent subspace $E\subset T_0\mathbb{R}^n$ (in a small enough neighborhood $U$ of the origin) with the 
plane defined by $x_a^t=0$, $x_a^t\in \mathcal{V}_1$, and coordinatize the space $E$ by the variables
$\mathsf{y}\in\mathcal{V}_2$.

\vskip .1cm
We need to prove that in the set $E^+$, defined by the system of inequalities $x_a^t\geq 0$, $x_a^t\in \mathcal{V}_1$,
the function ${\sf{F}}^{(1)}(x)=\min_{1\leq a\leq L} (F_{a,1}(x))$ has a sharp local maximum at the origin. We make a substitution, allowed in $E^+$,
$x_a^t=(z_a^t)^2$, $x_a^t\in \mathcal{V}_1$. Our functions have the form 
$$F_{a,1}=\mathfrak{Q}_a+\psi_a\ ,\ \text{where}\ \ \mathfrak{Q}_a=-\sum_{i=2}^{m_a}\lambda^i_{a} (z_a^i)^2+\mathfrak{q}_a(\mathsf{y})\ \text{and}\ \psi_a=o(2)\ .$$
 The family $\{ \mathfrak{q}_a\}$ is negatively defined on $E$ hence the family $\{ \mathfrak{Q}_a\}$ is negatively defined on the space $\tilde{\mathbb{R}}^n$ with the coordinates
$z_a^t$ for $x_a^t\in \mathcal{V}_1$, and $\mathsf{y}\in\mathcal{V}_2$: if $\mathfrak{Q}_a\geq 0$, $1\leq a\leq L$, then  
$\mathfrak{q}_a(\mathsf{y})\geq \sum_{i=2}^{m_a}\lambda^i_{a} (z_a^i)^2\geq 0$, $1\leq a\leq L$, implying that $\mathsf{y}={\bf 0}$ and, 
consequently, $z_a^i=0$ for $1\leq a\leq L$, $2\leq i\leq m_a$.

\vskip .1cm
Since the family $\{ \mathfrak{Q}_a\}$ is negatively defined, there exists a positive constant  $\mathfrak{v}>0$ such that 
for any $x\in\tilde{\mathbb{R}}^n$ there exists $a_\circ (x) \in\{1,\dots ,L\}$ for which 
$$\mathfrak{Q}_{a_\circ (x)}(x)\leq -2\mathfrak{v} \left\Vert x\right\Vert^2\ .$$

Due to the order of smallness of functions $\psi_a$ there exists a neighborhood $U$ of the origin in $\tilde{\mathbb{R}}^n$, in which  
$$\vert\psi_a(x)\vert \leq \mathfrak{v} \left\Vert x\right\Vert^2\ \ \text{for any}\ a=1,\ldots ,L\ .$$
Therefore, for any $x\in U$ we have ${\sf{F}}_1(x)\leq F_{a_\circ (x),1}(x)\leq -\mathfrak{v} \left\Vert x\right\Vert^2$.
\myblacksquare

\vskip .4cm\noindent 
{\textbf{Acknowledgements.}
Part of the work of S. S. has been carried out in the framework of the Labex Archimede (ANR-11-LABX-0033) and of the A*MIDEX project (ANR-11-
IDEX-0001-02), funded by the Investissements d'Avenir French Government program managed by the French National Research Agency (ANR). Part of the work of S. S. has
been carried out at IITP RAS. The support of Russian Foundation for Sciences (project No. 14-50-00150) is gratefully acknowledged by S. S. The work of O. O. was supported by
the Program of Competitive Growth of Kazan Federal University and by the grant RFBR 17-01-00585.} 

\vspace{-.2cm}

\end{document}